\documentclass[11pt]{article}

\usepackage{tikz}
\usepackage{adjustbox}
\usepackage{subcaption}
\usepackage{caption}

\usepackage{hyperref}
\usepackage[utf8]{inputenc}
\usepackage{color}
\usepackage{amsmath,amsfonts,amssymb,graphicx}
\usepackage{amsthm}
\usepackage{mathtools}
\usepackage[top=2.8cm,bottom=2.8cm,left=2.5cm,right=2.5cm]{geometry}
\usepackage{algorithm}
\usepackage{algorithmic}
\usepackage{soul}
\usepackage{bm}
\usepackage[toc]{appendix}
\usepackage{multirow}
\usepackage{enumitem}

\usepackage{authblk}
\usepackage{siunitx}
\usepackage{comment}
\usepackage{todonotes}

\newtheorem{lemma}{Lemma}
\newtheorem{remark}{Remark}
\newtheorem{proposition}{Proposition}
\newtheorem{corollary}{Corollary}
\newtheorem{definition}{Definition}
\newtheorem{theorem}{Theorem}

\newtheorem*{example*}{Example}

\def\sgn{{\rm sgn}}

\title{Fourier Residual Networks Achieve Spectral Accuracy\\for Discontinuous Functions} 

\author[1]{Owen Davis \thanks{\texttt{ondavis@sandia.gov}}}
\affil[1]{{\small{Department of Uncertainty Quantification and Optimization, Sandia National Laboratories, Albuquerque, USA}}}
\author[2]{Mohammad Motamed \thanks{\texttt{motamed@unm.edu}}}
\affil[2]{{\small{Department of Mathematics and Statistics, University of New Mexico, Albuquerque, USA}}}
\author[3]{Olof Runborg \thanks{\texttt{olofr@kth.se} (Corresponding author)}}
\affil[3]{{\small{Department of Mathematics, KTH Royal Institute of Technology, Stockholm, Sweden}}}
\date{\today}

\begin{document}

\maketitle

%\bigskip
\hrule
\bigskip

\begin{abstract}
We present a constructive approximation framework for analyzing the expressive power of Fourier residual networks in approximating a broad class of one-dimensional functions. Our study covers both piecewise continuous functions---including those with jump discontinuities in the function and its derivatives---and fully smooth functions. We show that Fourier residual networks achieve spectral convergence without requiring periodicity or continuity, thereby overcoming key limitations of classical linear Fourier approximation and nonlinear methods, without being restricted to Barron-type function spaces. Our approach builds on classical techniques from approximation theory, including fixed-point iteration and Hermite interpolation by trigonometric polynomials. We support our theoretical results with numerical experiments based on both the constructed approximations and a randomized algorithm developed in our earlier work.
\end{abstract}
\noindent {\it Keywords:} Fourier residual networks, spectral approximation, discontinuous functions, piecewise-smooth functions, Gibbs phenomenon, constructive approximation.
\smallskip

\noindent {\it MSC (2020):} Primary 41Axx; Secondary 68T07, 65T40.
\bigskip
\hrule

\section{Introduction}
\label{sec:intro}

Two challenges commonly arise when approximating non-periodic, piecewise-smooth functions with jump discontinuities: i) the lack of spectral convergence, and ii) the persistence of Gibbs-like oscillations near discontinuities. These phenomena are well known in classical spectral methods, but they also appear in deep learning models used to approximate functions with sharp features. We refer to these combined challenges as the \emph{roughness barrier}. In this work, we demonstrate that Fourier residual networks, 
which can be viewed as a form of compositional spectral representation, are capable of overcoming the roughness barrier, thereby achieving high-resolution approximations even in the presence of discontinuities.

To better understand the significance of our approach, we briefly revisit classical spectral approximation methods and modern neural network models, and how they contend with the roughness barrier. In doing so, we first clarify what we mean by \emph{spectral convergence}. In this work, spectral convergence means that for every $p > 0$, there exists a constant $C_p > 0$ such that the approximation error satisfies
\[
\text{error} \le C_p N^{-p},
\]
where $N$ denotes the number of degrees of freedom, e.g., the number of terms in a series or the number of network parameters. This corresponds to convergence faster than any algebraic rate (for which $p$ is fixed). We emphasize that this is weaker than exponential convergence, which would require a bound of the form $\text{error} \le C \rho^{-N}$ for some $\rho > 1$.

In classical spectral approximation, global expansions in orthogonal bases---such as Fourier series---converge spectrally for infinitely smooth periodic functions and exponentially for analytic periodic functions. However, for non-periodic, piecewise-smooth functions, these methods fail to achieve uniform convergence across the domain and are plagued by spurious oscillations near discontinuities---an artifact known as the \emph{Gibbs phenomenon}; see, e.g., \cite{Hewitt1979,GottliebShu1997}. These oscillations remain bounded away from zero as the number of terms increases, and the convergence is at best algebraic, with the rate determined by the global smoothness of the function. 
This limitation has long motivated the development of specialized reconstruction techniques that post-process truncated Fourier data in order to recover higher accuracy. This is often done in the setting of smooth but non-periodic functions, where artificial periodic extension introduces boundary discontinuities, as considered in Section~\ref{sec:nonperiodic}.
One of the simplest approaches is filtering, in which high-frequency modes are damped by smoothly decaying multipliers \cite{GottliebShu1997,Tadmor2007}. While this does not improve the overall $L^2$-error, since the original Fourier coefficients are already optimal in that norm, it can yield spectral convergence in the maximum norm on compact subsets away from discontinuities. 
More sophisticated techniques include spectral reprojection, where the truncated Fourier series is re-expanded in alternative polynomial bases, such as Gegenbauer or Freud polynomials, \cite{GottliebShu1997,GelbTanner2006}. These methods can achieve spectral convergence in the maximum norm up to the location of the discontinuity. Related approaches, such as inverse polynomial reconstruction \cite{JungShizgal2004,Pasquetti2004,HrycakGrochenig2010,AdcockHansen2012}, further stabilize this reprojection process. Another notable class of methods is singular Fourier Pad\'e approximation \cite{DriscollFornberg2001,Beckermann_etal2011}, which incorporates information about the singular structure of the function through quotients of trigonometric polynomials in order to achieve spectral accuracy in a similar sense. Despite these advances, fundamental limitations remain. In particular, for analytic but non-periodic functions it is known that any stable reconstruction procedure based solely on finitely many Fourier coefficients can converge at most root-exponentially, i.e.,
$\sim \rho^{-\sqrt{N}}$,
with respect to the number $N$ of retained modes. Any method achieving faster convergence must necessarily be unstable; see, e.g., \cite{AdcockHansenShadrin2014,Boyd2005trouble}.
A detailed comparison between our compositional Fourier approximation approach and these reconstruction techniques is beyond the scope of the present work and is left for future investigation.

In neural network approximation, similar challenges to those in spectral methods arise when approximating functions with jump discontinuities. Classical results show that neural networks---particularly those with ReLU or other non-polynomial activations---can, in principle, approximate a broad class of functions \cite{cybenko1989approximation,Hornik_etal:89,leshno1993multilayer}. For piecewise-smooth targets, more recent work has established sharp algebraic approximation rates and optimal complexity bounds for deep ReLU networks \cite{petersen2018optimal}. At the same time, periodic activations have been shown to enable very fast approximation rates for globally smooth H\"older classes through highly expressive ``deep Fourier'' constructions \cite{yarotsky2019phase}. However, these results address different regimes from the one considered here: the former concerns standard ReLU architectures with algebraic rates for broad piecewise-smooth classes, while the latter concerns the approximation of globally smooth targets using expressive periodic-activation constructions based on encoding and lookup of function information. In particular, neither directly addresses the classical Fourier roughness barrier for non-periodic, piecewise-smooth functions with jumps that is the focus of the present work.

The approximation results in this paper concern expressivity: we ask whether there exist Fourier residual network parameters that achieve high accuracy with controlled network size, independently of how those parameters are found. This is distinct from, but closely related to, the practical question of whether training algorithms can efficiently find such parameters. In general, a significant gap remains between expressive guarantees and the performance of trained networks \cite{AdcockDexter2021}. One important source of this gap is spectral bias: networks trained via first order gradient-based optimization tend to prioritize learning low-frequency, smooth features over high-frequency, sharp, or discontinuous ones \cite{Rahaman_etal2019,Xu_etal2019,Basri_etal2020}. This optimization-driven effect can impede accurate representation of high-frequency or discontinuous features in practice, even when suitable approximating parameters exist. Furthermore, Gibbs-like oscillations---artifacts similar in form to those observed in spectral approximations---have been reported in various contexts involving sharp transitions, e.g., in physics-informed neural networks, implicit neural representations, and function regression with multilayer perceptrons \cite{Wang_etal2021,Bubeck_etal2021}. Altogether, these observations suggest that deep networks face two related but distinct challenges for piecewise-smooth or discontinuous targets: the expressivity challenge of representing such functions efficiently, and the algorithmic challenge of finding good representations through training.

In contrast to existing approaches, we demonstrate, via constructive approximation, that there exist deep Fourier residual networks that overcome the roughness barrier in a precise expressivity sense. 
Our theoretical results build on classical approximation theory techniques, including fixed-point iteration and Hermite interpolation by trigonometric polynomials. 
Our network architecture employs complex exponential (or trigonometric) activation functions within a residual-style framework and avoids direct dependence on initial Fourier coefficients. Instead, it distributes non-uniform wave numbers (frequencies) across the network's width and leverages a compositional structure through its depth. We establish quantitative approximation bounds for this class of architectures, which apply even to non-periodic functions with localized discontinuities (see Theorems~\ref{thm:step}--\ref{thm:main}). 
In particular, for piecewise-$C^\infty$ functions with jump discontinuities, the resulting approximation exhibits spectral convergence in the width parameter and exponential convergence in depth. In the specific case of step functions, we show that deep Fourier residual networks achieve exponential accuracy in depth in a fully monotonic manner, without undershoots or overshoots. For more general functions, the approximations may still exhibit oscillatory behavior, but the support of these oscillations shrinks rapidly as the number of parameters increases. This provides a mechanism for resolving the Gibbs phenomenon in an asymptotic sense, through increasingly localized oscillatory regions, in a manner that differs fundamentally from classical spectral reconstruction approaches.

Beyond these expressivity results, Fourier residual networks can be trained using the random-sampling-based algorithm introduced in our earlier work \cite{Davis_etal:2025}, which was designed in part to mitigate spectral bias by avoiding gradient-based optimization. Although the theory developed here concerns expressivity rather than optimization, this connection naturally raises the practical question of whether the predicted rates can also be realized computationally. In the numerical experiments, we therefore test both the explicit constructive approximations used in the proofs and the extent to which this existing training algorithm empirically realizes the predicted approximation behavior.

From a theoretical perspective, our work also advances neural network approximation theory by moving beyond traditional smoothness assumptions. Much of the existing theory, including recent developments for Fourier networks, has been formulated within the framework of spectral Barron spaces introduced in \cite{Barron1993} (see also \cite{KlusowskiBarron2018,LiaoMing:2025} and references therein). These spaces characterize target functions through the decay of their Fourier spectra and impose global smoothness and continuity. While well-suited for smooth function approximation, this framework does not accommodate piecewise-smooth or discontinuous targets. This limitation applies in particular to recent works on Fourier networks, including those based on randomized Fourier features and residual architectures \cite{1layerKammonen,kammonen2023smaller,DavisMotamed:2024}. These studies operate within the spectral Barron space framework and therefore restrict their approximation guarantees to globally smooth functions. In contrast, our analysis does not rely on the inverse Fourier transform representation of the target function. This allows us to relax the global smoothness assumptions inherent in previous work and extend the approximation theory of deep Fourier networks to a broader class of functions, including those with jump discontinuities.

The remainder of this paper is organized as follows. In Section \ref{sec:FNs}, we review the architecture of Fourier residual networks and briefly summarize the randomized training algorithm introduced in our earlier work, which we later use in the numerical experiments. Section \ref{sec:step} presents a motivating example by constructing deep Fourier networks that approximate step functions with exponential accuracy. In Section \ref{sec:generalization}, we extend this analysis to more general piecewise-smooth functions and establish spectral convergence under mild regularity assumptions. Section \ref{sec:numerics} provides numerical experiments that support our theoretical findings and illustrate the practical effectiveness of the proposed architecture. Finally, Section \ref{sec:conclusions} offers concluding remarks and outlines directions for future work.

%%%%%%%%%%%%%%%%%%%%%%%%%%%%%%%%%%%%%%%%%%
\section{Fourier Residual Networks}
\label{sec:FNs}

A Fourier residual network (Fourier ResNet) of depth $L \ge 2$ and width $W\ge 1$
is a real-valued function of one variable, 
$f_L: {\mathbb R} \rightarrow {\mathbb R}$, defined recursively as:
\begin{align*}
  f_1(x) &= \underbrace{\Re \sum_{k=1}^{W} c_{1,k} \, e^{i\omega_{1,k} x}}_{g_1(x)},\\
f_{\ell}(x) &= f_{\ell-1}(x) + \underbrace{\Re \sum_{k=1}^{W} c_{\ell,k} \, e^{i\omega_{\ell,k}x}}_{g_{\ell}(x)} + \underbrace{\Re \sum_{k=1}^{W} c'_{\ell,k} \, e^{i\omega'_{\ell,k}f_{\ell-1}(x)}}_{h_{\ell}(f_{\ell-1}(x))}, \qquad \ell=2, \dotsc, L.
\end{align*}
Here, $\omega_{\ell,k}, \omega_{\ell,k}'\in\mathbb{R}$ and $c_{\ell,k},c_{\ell,k}'\in\mathbb{C}$ are respectively frequency and amplitude parameters of the network. Figure~\ref{fig:FNN} illustrates the residual-style architecture of the Fourier network through a graph-based schematic.
\begin{figure}[!ht]
    \centering
    \includegraphics[width=0.99\linewidth]{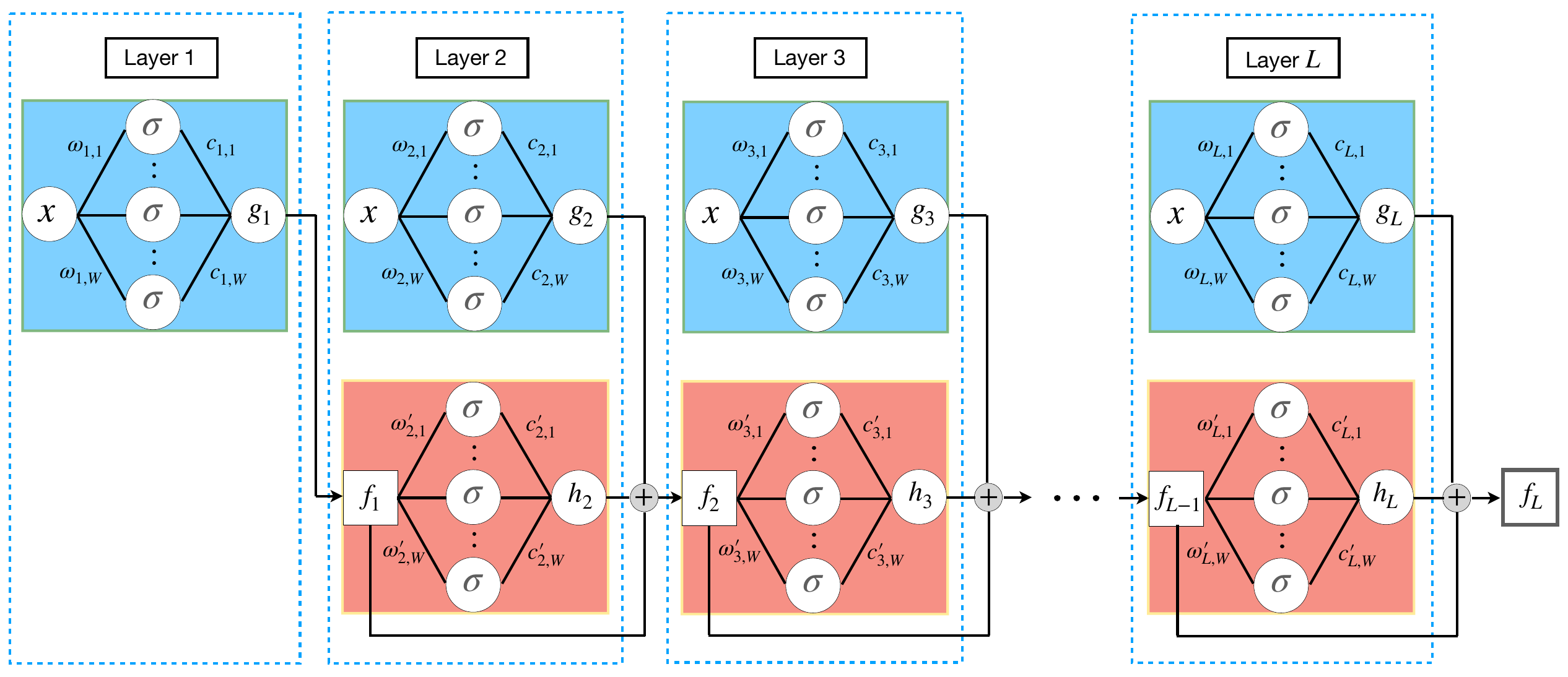}
    \caption{Schematic of the Fourier network \( f_L \), which employs a residual-style architecture. The first layer consists of \( W \) neurons and takes \( x \) as input, producing the output \( f_1(x) = g_1(x) \). Each subsequent layer \( \ell \ge 2 \) has two parallel branches, each with \( W \) neurons: an upper branch that computes \( g_{\ell}(x) \) from \( x \), and a lower branch that computes \( h_{\ell}(f_{\ell-1}(x)) \) from \( f_{\ell-1}(x) \), which is the output of the previous layer. The outputs of both branches are added to the previous layer's output \( f_{\ell-1}(x) \), yielding the updated output \( f_{\ell}(x) \). Here, $\sigma$ denotes the complex exponential activation function.}
    \label{fig:FNN}
\end{figure}

Utilizing Euler's formula $e^{i \, \theta} = \cos \theta + i \, \sin \theta$, an explicit real-variable formulation of Fourier ResNets can be expressed as:
\begin{align}
f_{1}(x) &= g_{1}(x;\bm{\omega}_{1},\bm{a}_{1},\bm{b}_{1}), \label{eq:real1}\\
f_{\ell}(x) &= f_{\ell-1}(x) + g_{\ell}(x;\bm{\omega}_{\ell},\bm{a}_{\ell},\bm{b}_{\ell}) + h_{\ell}(f_{\ell-1}(x);\bm{\omega}_{\ell}',\bm{a}_{\ell}',\bm{b}_{\ell}'), \qquad \ell=2,\dotsc, L, \label{eq:real2}
\end{align}
with real-valued functions $g_{\ell}$ and $h_{\ell}$ explicitly given by: 
\begin{align}
  g_{\ell}(x) &= \sum_{k=1}^{W} a_{\ell,k}\sin(\omega_{\ell,k}x)+b_{\ell,k}\cos(\omega_{\ell,k}x), \qquad \ell = 1,\dotsc, L, \label{eq:g}\\
  h_{\ell}(f_{\ell-1}(x)) &= \sum_{k=1}^{W} a'_{\ell,k}\sin(\omega'_{\ell,k} f_{\ell-1}(x))+b'_{\ell,k}\cos(\omega'_{\ell,k}f_{\ell-1}(x)), \qquad \ell = 2,\dotsc, L. \label{eq:gp}
\end{align}

Fourier networks were first introduced in \cite{1layerKammonen} as single hidden layer networks (see Block 1 in Figure~\ref{fig:FNN}) capable of approximating functions $f\in L^{1}(\mathbb{R}^d)$ that admit a pointwise Fourier representation
\begin{equation}\label{eq:FourierRep}
f(x) = \int_{\mathbb{R}^d} \hat{f}(\omega) e^{i  \omega \cdot x} \, d\omega, \quad \text{for all } x \in \mathbb{R}^d,
\end{equation}
for some complex-valued function $\hat{f}\in L^{1}(\mathbb{R}^d)$. Functions with these properties belong to the spectral Barron space of order zero \cite{KlusowskiBarron2018,LiaoMing:2025}, and under these conditions, they are uniformly continuous and vanish at infinity, reflecting global smoothness and rapid spectral decay; see, e.g., \cite[Corollary 1.21]{SteinWeiss1971}. 

Given that the target function has an inverse Fourier integral representation \eqref{eq:FourierRep}, a one-layer Fourier network can be viewed as a trainable Monte Carlo estimator of this integral. This relationship is established in \cite{1layerKammonen} by assuming the network's frequency parameters are independent and identically distributed random variables following a distribution $p_1:\mathbb{R}^d\rightarrow [0,\infty)$. Then, as in importance sampling, the inverse Fourier integral can be estimated as
\begin{equation}
f(x) = \int_{\mathbb{R}^d} \frac{\hat{f}(\omega) e^{i \omega \cdot x} \,p_1(\omega)}{p_1(\omega)} d\omega\approx \Re \sum_{k=1}^{W}\frac{\hat{f}(\omega_k)e^{i\omega_k\cdot x}}{Wp_1(\omega_k)} = f_{MC}^{W},
\end{equation}
where $f_{MC}^{W}$ is a $W$-term Monte Carlo estimator of $f$. Block 1 of the  network $f_{1}$ and the Monte Carlo estimator $f_{MC}^{W}$ coincide when the network amplitudes are chosen as
$$c_{1,k} = \frac{\hat{f}(\omega_{1,k})}{Wp_1(\omega_{1,k})}, \qquad k=1, \dots, W.$$

This relationship enables the authors in \cite{1layerKammonen} to derive a closed-form upper bound on the network's generalization error, which depends on the frequency distribution $p_1$. This upper bound is minimized over $p_1$ to obtain an optimal distribution of frequencies $p_{1}^{*}$ that satisfies the proportionality $p_{1}^{*}(\omega_1)\propto |\hat{f}(\omega_1)|$. Utilizing this optimal distribution, they develop a training algorithm that employs a Metropolis-Hastings procedure to approximately sample the network's frequency parameters from $p_{1}^{*}$.

Later, in \cite{kammonen2023smaller}, the deep Fourier residual network architecture was introduced and its approximation capabilities were studied for target functions in spectral Barron space of order zero. Notably, this work demonstrated that the generalization error for deep Fourier residual networks can be lower than for one hidden layer networks for target functions satisfying $||\hat{f}||_{L^{1}(\mathbb{R}^d)}/||f||_{L^{\infty}(\mathbb{R}^d)}\gg 1$. This work also explored training deep Fourier residual networks by first sampling the frequencies $\omega_{\ell}$ in each block using the optimal sampling algorithm from \cite{1layerKammonen}, and then conducting global gradient-based optimization over all network parameters simultaneously.

Subsequently, in \cite{Davis_etal:2025}, an optimal sampling algorithm was developed to train deep Fourier residual networks in a block-by-block manner, requiring no subsequent global gradient-based optimization, and enabling an adaptive network architecture that is expanded during training. The process begins with a Block 1, which is trained using the optimal sampling algorithm from \cite{1layerKammonen}. Additional blocks are then added and trained sequentially until a specified tolerance is reached. Crucially, at any Block $\ell \geq 2$, there are two distinct types of frequency parameters $\omega_{\ell}$ and $\omega'_{\ell}$ that are sampled from different a priori derived optimal distributions $p^{*}_{\ell}(\omega_{\ell})$ and $q^*_{\ell}(\omega_{\ell}')$ specific to that block. For a comprehensive discussion of the sampling algorithm and its advantages we refer readers to \cite{Davis_etal:2025}.

In contrast with previous works on Fourier networks \cite{1layerKammonen, kammonen2023smaller}, we consider target functions $f \in {C_{\mathrm{cl}}^m}((-1,1) \setminus \{0\})$ for some $m \geq 0$, which may exhibit a discontinuity at $x = 0$. Notably, such functions lack a classical pointwise inverse Fourier integral representation, and our theoretical results do not depend on this representation. 

However, in Section~\ref{sec:numerics}, we evaluate the performance of the optimal sampling algorithm from \cite{Davis_etal:2025} in learning these discontinuous target functions, acknowledging that the theoretical foundation for the sampling algorithm presupposes an inverse Fourier integral representation. To accommodate this, we adopt a relaxed definition of the inverse Fourier transform. Specifically, if $f \in {C_{\mathrm{cl}}^m}((-1,1) \setminus \{0\})$ is extended outside $(-1,1)$ to a function $\tilde{f} \in C_{\mathrm{c}}^m(\mathbb{R} \setminus \{0\})$ via a smooth, compactly supported continuation that coincides with $f$ on $(-1,1)$, then $\tilde{f} \in L^1(\mathbb{R}) \cap L^2(\mathbb{R})$. However, its Fourier transform $\hat{f}$ may not belong to $L^1(\mathbb{R})$, which prevents the direct application of the classical pointwise inversion formula. Nevertheless, since \( \tilde{f} \in L^1(\mathbb{R}) \), its Fourier transform \( \hat{f} \) is well-defined; moreover, since \( \tilde{f} \in L^2 (\mathbb{R}) \), Plancherel's theorem implies \( \hat{f} \in L^2(\mathbb{R}) \). In this context, the results of Carleson~\cite{Carleson1966} and Hunt~\cite{Hunt1968} guarantee that the Fourier inversion formula holds pointwise almost everywhere:
\[
f(x) = \lim_{R \to \infty} \int_{-R}^{R} \hat{f}(\omega) e^{i  \omega x} \, d\omega, \quad \text{for almost every } x \in (-1,1).
\]
 For detailed discussions see, \cite{Stein1970,SteinWeiss1971,Grafakos2014}. In the present work, this relaxed, pointwise almost everywhere existence of the inverse Fourier integral representation serves as justification for the use of the optimal sampling algorithm in Section~\ref{sec:numerics}.

%%%%%%%%%%%%%%%%%%%%%%%%%%%%%%%%%%%%%
\section{A Motivating Example: Approximating Step Functions}
\label{sec:step}

As an example, this section explores how well Fourier ResNets can approximate piecewise-constant functions, using the sign function as a simple case study. 

Let $x \in [-1,1]$. Consider the sign function:
\begin{equation}\label{eqn:sgn_function}
f(x) = \sgn(x) := \begin{cases}
  1, & 0<x\leq 1,\\
  0, & x=0,\\
  -1, & -1\leq x<0.
  \end{cases}
\end{equation}
It is well-known that the $N$-term Fourier series approximation of this step function, which corresponds to a one-layer Fourier network with width $W = N$ and integer frequencies, suffers from Gibbs' Phenomenon. This results in spurious oscillations near the discontinuity at $x=0$, where the Fourier approximation overshoots or undershoots the step function. Importantly, the magnitude of this error remains ${\mathcal O}(1)$, regardless of how large $N$ becomes (i.e., for any finite $N$).

Instead, we consider the following approximation of the sign function:
\begin{align}
f_{1}(x) &= \sin\left(\frac{\pi}2 \, x\right), \label{eq:f0def}\\
f_{\ell}(x) &= f_{\ell-1}(x) + \frac{1}\pi\sin\left({\pi}f_{\ell-1}(x)\right), \qquad \ell=2,\dotsc, L. 
\label{eq:fndef}
\end{align}
This defines a Fourier ResNet with width $W = 1$, which takes the form of the equations \eqref{eq:real1}-\eqref{eq:gp}, with the following parameters:
$$
  \omega_{1,1} =\frac{\pi}{2}, \quad a_{1,1} = 1, \quad b_{1,1} = 0, 
$$
$$
  \omega_{\ell,1}=a_{\ell,1}=
  b_{\ell,1}= b'_{\ell,1} = 0, \qquad 
  \omega'_{\ell,1}=\pi, \quad
  a'_{\ell,1}=1 / \pi, \quad \ell \ge 2.
$$
Our first result shows that as the depth $\ell$ increases, the sequence ${f_{\ell}}$ converges to the sign function at an exponential rate in $L^p$-norms.
Furthermore, all functions in the sequence $\{f_{\ell}\}_{\ell \ge 1}$ are strictly increasing, with no overshoots or Gibbs-like phenomena. The precise result of our analysis is formulated in the following theorem.

\begin{theorem}\label{thm:step}
For all $\ell \geq 1$, the functions $f_{\ell}$ defined in \eqref{eq:f0def} and \eqref{eq:fndef} satisfy the following properties: 
\begin{align} 
&f_{\ell}\colon [-1,1] \to [-1,1], \label{eq:fnprop1}\\ 
&f_{\ell} \text{ is strictly increasing on } [-1,1],\label{eq:fnprop2}\\ 
&f_{\ell}(-x) = -f_{\ell}(x) \text{ for all } x \in [-1,1].\label{eq:fnodd} 
\end{align} 
Moreover, for $x\in[-1,1]$,
\begin{equation}\label{eq:fnconv}
  \lim_{\ell \to \infty}f_{\ell}(x)
  =\sgn(x),
\end{equation}
and for any $p \in (0, \infty)$, there exists a constant $C_p > 0$, depending only on $p$, such that 
\begin{equation}\label{eq:fnerrestimate} 
\| \sgn - f_{\ell} \|_{L^p(-1,1)} \leq C_p \, 2^{- \ell / p}.
\end{equation}  
\end{theorem}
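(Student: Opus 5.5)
The recursion is a fixed-point iteration $f_\ell = \phi(f_{\ell-1})$ with $\phi(y) = y + \frac{1}{\pi}\sin(\pi y)$, started from $f_1(x)=\sin(\frac{\pi}{2}x)$. Every structural property in the theorem will come from studying the scalar map $\phi$ on $[-1,1]$ and composing it with $f_1$.

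\textbf{Properties of $\phi$.} On $[-1,1]$ we have $\phi'(y) = 1+\cos(\pi y) \ge 0$, with equality only at $y=\pm1$. Hence $\phi$ is strictly increasing there. It is odd, it fixes $-1,0,1$, and so it maps $[-1,1]$ onto itself. For $y\in(0,1)$, $\phi(y)-y = \frac{1}{\pi}\sin(\pi y)>0$, and $\phi(y)<\phi(1)=1$.

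\textbf{Properties \eqref{eq:fnprop1}--\eqref{eq:fnconv}.} Since $f_1$ is odd, strictly increasing and maps $[-1,1]$ onto $[-1,1]$, induction gives \eqref{eq:fnprop1}--\eqref{eq:fnodd} for every $f_\ell=\phi^{\ell-1}\circ f_1$. For pointwise convergence, fix $x\in(0,1]$. Then $y_\ell=f_\ell(x)$ is increasing and bounded by $1$, so it has a limit, which must be a fixed point of $\phi$ in $(0,1]$; the only one is $1$. The case $x<0$ follows by oddness, and $f_\ell(0)=0$ for all $\ell$.

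\textbf{Quantitative convergence.} For the rate, I will write $e = 1-y$ for $y\in[0,1]$. Then
\[
1-\phi(y) = e - \tfrac{1}{\pi}\sin(\pi e) = \psi(e).
\]
Because $\sin t\ge t - t^3/6$, we get $\psi(e)\le \frac{\pi^2}{6}e^3$, which is cubic and gives superfast convergence near $y=1$. Near $y=0$, however, $\phi(y)\approx 2y$, so small values double each step. That is exactly where the rate $2^{-\ell/p}$ comes from: the transition layer has width about $2^{-\ell}$.

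I will make this precise in three steps.

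1. \emph{Doubling bound.} Since $\sin(\pi y)\ge 2y$ on $[0,1/2]$, we have $\phi(y)\ge y+\frac{2}{\pi}y$. This gives only a factor $1+2/\pi$ per step, which is too weak. Instead I use $\phi(y)\ge 2y - \frac{\pi^2}{6}y^3$. To avoid losing the exact base $2$, I will argue via a Lipschitz/inverse bound, as follows.

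2. \emph{Inverse map.} Let $\phi^{-1}$ be the inverse on $[0,1]$. I will show $\phi^{-1}(z)\le z/2\cdot(1+c z^2)$. Then the set $A_\ell=\{x>0: 1-f_\ell(x)\ge\delta\}$ is an interval $(0,a_\ell)$ with $a_\ell=f_1^{-1}\bigl(\phi^{-(\ell-1)}(1-\delta)\bigr)$. Since $\prod_j(1+cz_j^2)$ converges when the $z_j$ decay geometrically, $a_\ell\le C_\delta 2^{-\ell}$.

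3. \emph{Splitting the $L^p$ integral.} On $[0,1]$,
\[
\|1-f_\ell\|_{L^p(0,1)}^p \le |A_\ell|\cdot 1 + \int_{A_\ell^c}|1-f_\ell|^p .
\]
The first term is at most $C2^{-\ell}$. On $A_\ell^c$ the error decays, but this alone is still not enough, because points that have only just left the layer do not yet have small error. I will therefore sum over dyadic layers. Let $B_j=\{x: \phi^{j}(\text{value})\text{ first exceeds } 1-\delta\}$, with $|B_j|\lesssim 2^{-j}$. A point entering at step $j$ has error at most $\delta^{3^{\ell-j}}$ at step $\ell$, up to the constant $\frac{\pi^2}{6}$ absorbed by choosing $\delta$ small. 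So
\[
\int |1-f_\ell|^p \lesssim \sum_{j\le \ell} 2^{-j}\,\delta^{p3^{\ell-j}} + 2^{-\ell} \lesssim 2^{-\ell},
\]
since $2^{\ell-j}\delta^{p3^{\ell-j}}$ is summable. Taking $p$-th roots and using oddness for $[-1,0]$ gives \eqref{eq:fnerrestimate}.

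The main obstacle is step 2: getting the sharp base $2$, rather than $1+2/\pi$, in the measure of the transition layer. I will handle it by comparing $\phi^{-1}$ with $z\mapsto z/2$, using the cubic remainder, and controlling the product of correction factors through the geometric decay of the iterates. If this comparison proves delicate, a fallback is to work with $\arctan$-type conjugation, but the product bound should suffice.
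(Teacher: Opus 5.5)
Your argument is correct, and the structural and pointwise parts match the paper's (properties of $\phi$, induction, monotone convergence to the only fixed point in $(0,1]$). The rate argument is where you take a different and longer route.

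The paper never separates the transition layer from the region near $y=1$. It proves the single comparison $0\le\tilde\phi(\varepsilon):=\varepsilon-\frac{1}{\pi}\sin(\pi\varepsilon)\le\varepsilon^2$ on all of $[0,1]$. Your cubic bound gives this on $[0,1/2]$, and the identity $\tilde\phi(\varepsilon)=\tilde\phi(1-\varepsilon)-1+2\varepsilon$ transfers it to $[1/2,1]$. Iterating gives $1-f_\ell(x)\le(1-f_1(x))^{2^{\ell-1}}\le(1-x)^{2^{\ell-1}}$ pointwise. Direct integration then yields $\|\sgn-f_\ell\|_{L^p}^p\le 2/(2^{\ell-1}p+1)\le\frac{4}{p}2^{-\ell}$, with the explicit constant $C_p=(4/p)^{1/p}$.

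The obstacle you single out, getting base $2$ instead of $1+2/\pi$, disappears once you compare with the quadratic map $\varepsilon\mapsto\varepsilon^2$ rather than a linear one. The reason is that $1-(1-y)^2=2y-y^2$ already encodes the doubling at $y=0$.

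Your route has three pieces: bounding the backward iterates $\phi^{-j}(1-\delta)\lesssim 2^{-j}$ via a convergent product, using cubic convergence after entry, and summing over dyadic entry layers. It costs more machinery and gives non-explicit constants. In exchange it gives a sharper error profile: the error is $O(1)$ only on a layer of width $O(2^{-\ell})$, and at distance $\sim 2^{k-\ell}$ it is bounded by something like $\rho^{3^k}$. The paper's pointwise bound only gives $e^{-2^{k-1}}$ there.

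One step in your argument should be made explicit. Boundedness of $\prod_j(1+cz_j^2)$ needs geometric decay of $z_j=\phi^{-j}(1-\delta)$, and the bound $\phi^{-1}(z)\le\frac{z}{2}(1+cz^2)$ does not give that near $z=1$. Indeed, $\phi^{-1}(1)=1$ forces $c\ge 1$, so the factor $\frac12(1+cz^2)$ is not a contraction there. The decay follows instead from $\phi(y)\ge(1+\kappa)y$ on $[0,y_*]$, where $y_*=\phi^{-1}(1-\delta)<1$ and $\kappa=\sin(\pi y_*)/(\pi y_*)>0$. This holds because $\sin(\pi y)/(\pi y)$ is decreasing on $(0,1]$.
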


The proof of Theorem \ref{thm:step} relies on analyzing the fixed-point iteration associated with the recursive definition \eqref{eq:f0def}–\eqref{eq:fndef}. For fixed $x \in [-1,1]$, the sequence $\{ f_{\ell}(x)\}_{\ell \ge 1}$ is defined by the iteration:
$$
 f_{\ell+1}(x) = \phi(f_{\ell}(x)), \qquad \ell \ge 1,
$$
where the iteration function $\phi: {\mathbb R} \rightarrow {\mathbb R}$ is given by:
\begin{equation}\label{eq:fpfunc}
\phi(y) = y+\frac{1}\pi\sin\left({\pi}y\right).
\end{equation}

We begin by summarizing key properties of the iteration function $\phi$ in the following lemma.

\begin{lemma}\label{lem:phiprop}
Let $\phi$ be defined as in \eqref{eq:fpfunc}, with $y \in [-1,1]$. 
Then the following hold:
\begin{align}
 &\text{$\phi$ maps $[-1,1]$ into $[-1,1]$}, \label{eq:phiprop1}\\
 &\text{$\phi$ is strictly increasing on $[-1,1]$}, \label{eq:phiprop2}\\
 &\text{$\phi(-y) = -\phi(y)$ for all $y \in [-1,1]$}. \label{eq:phiodd}
\end{align}
\end{lemma}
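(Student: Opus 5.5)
The plan is to prove the three properties in the order \eqref{eq:phiodd}, \eqref{eq:phiprop2}, \eqref{eq:phiprop1}, since the mapping property follows from the other two together with the values of $\phi$ at the endpoints.

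Oddness \eqref{eq:phiodd} is immediate: $y\mapsto y$ and $y\mapsto \sin(\pi y)$ are both odd, so $\phi(-y) = -y - \frac1\pi\sin(\pi y) = -\phi(y)$. This holds for all real $y$, in particular on $[-1,1]$.

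For strict monotonicity \eqref{eq:phiprop2}, differentiate:
\[
\phi'(y) = 1 + \cos(\pi y) \ge 0 .
\]
On $[-1,1]$ the derivative vanishes only where $\cos(\pi y) = -1$, that is, only at the endpoints $y=\pm 1$. So $\phi' > 0$ on the open interval $(-1,1)$. Since $\phi$ is continuous on $[-1,1]$, it is strictly increasing on the closed interval. The one point needing care is exactly this vanishing of $\phi'$ at the endpoints. It is handled by the mean value theorem: for $-1\le y_1<y_2\le 1$ we get $\phi(y_2)-\phi(y_1) = \phi'(\xi)(y_2-y_1)$ with $\xi\in(y_1,y_2)\subset(-1,1)$, and hence the difference is positive.

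For the mapping property \eqref{eq:phiprop1}, note that $\phi(\pm1) = \pm 1 + \frac1\pi\sin(\pm\pi) = \pm 1$, so both endpoints are fixed points. By strict monotonicity, every $y\in[-1,1]$ satisfies $-1 = \phi(-1) \le \phi(y) \le \phi(1) = 1$. Therefore $\phi([-1,1]) \subseteq [-1,1]$; in fact equality holds, by continuity and the intermediate value theorem.

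No step is a genuine obstacle. The only subtlety is the one already addressed: $\phi'$ vanishes at $y=\pm1$, so positivity of the derivative gives strictness only after the mean value argument on the interior.
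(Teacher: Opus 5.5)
Your proposal is correct and follows essentially the same route as the paper: positivity of $\phi'(y)=1+\cos(\pi y)$ on $(-1,1)$ together with continuity gives strict monotonicity on $[-1,1]$, monotonicity and the endpoint values $\phi(\pm1)=\pm1$ give the mapping property, and oddness follows directly from the definition. Your explicit mean value theorem step just spells out the continuity argument the paper leaves implicit.
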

\begin{proof}
Suppose $y\in(-1,1)$, then 
$$
\phi'(y) = 1 +\cos(\pi y)>0.
$$
Thus, 
since $\phi$ is also continuous on $[-1,1]$,
it is strictly increasing on the closed interval $[-1,1]$. This establishes \eqref{eq:phiprop2}. 
Moreover, the monotonicity implies that $$
\phi(y)\in[\phi(-1),\phi(1)]=[-1,1], \qquad \text{for all} \  y\in[-1,1],
$$
which proves \eqref{eq:phiprop1}. 
Finally, \eqref{eq:phiodd} follows directly from the definition \eqref{eq:fpfunc}, since both $y$ and $\sin(\pi y)$ are odd functions, making $\phi$ itself odd.
\end{proof}

%%%%%%%%%%%%%%%%%%%%%%%%%%%%%%%%%%%%%%%%%%%%%%%%%%%%%%%

Utilizing these properties of the iteration function, we next study the behavior of $f_{\ell}(x)$ by analyzing the sequence $\{ y_{\ell }\}$ defined by
\begin{equation} \label{eq:fpiter} 
y_{\ell+1} = \phi(y_\ell), \qquad \ell = 1, 2, \ldots, 
\end{equation} 
for different initial values $y_1 \in [-1,1]$.

\begin{lemma}\label{lem:fpconv}
Let $\phi$ be defined as in \eqref{eq:fpfunc}, and let the sequence $\{y_\ell\}$ be given by the iteration \eqref{eq:fpiter}. %Then:
\begin{itemize}
  \item[(i)] If $y_1 \in (0,1)$, then $\{y_\ell\} \subset [0,1]$ is strictly increasing and converges to $1$.
  \item[(ii)] If $y_1 \in (-1,0)$, then $\{y_\ell\} \subset [-1,0]$ is strictly decreasing and converges to $-1$.
  \item[(iii)] If $y_1 \in \{-1, 0, 1\}$, then $y_\ell = y_1$ for all $\ell \ge 1$.
\end{itemize}
\end{lemma}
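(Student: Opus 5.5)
The argument is a standard monotone fixed-point argument built on Lemma~\ref{lem:phiprop}. First I would settle case (iii) by direct evaluation: $\sin(\pi y)=0$ for $y\in\{-1,0,1\}$, so $\phi(y)=y$ there. Induction on $\ell$ then gives $y_\ell=y_1$ for all $\ell$.

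For case (i), I would first show the iteration preserves the open interval $(0,1)$. Take $y\in(0,1)$. Then $\sin(\pi y)>0$, so $\phi(y)>y>0$. By the strict monotonicity in \eqref{eq:phiprop2} we also have $\phi(y)<\phi(1)=1$. Hence $\phi$ maps $(0,1)$ into itself and satisfies $\phi(y)>y$ there. By induction, every $y_\ell$ lies in $(0,1)\subset[0,1]$ and $y_{\ell+1}=\phi(y_\ell)>y_\ell$, so the sequence is strictly increasing. Being bounded above by $1$, it converges to some $y^*\in(y_1,1]$. Since $\phi$ is continuous, passing to the limit in \eqref{eq:fpiter} gives $\phi(y^*)=y^*$, that is, $\sin(\pi y^*)=0$. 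The only zero of $\sin(\pi\,\cdot)$ in $(0,1]$ is $1$, so $y^*=1$.

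Case (ii) follows from (i) by oddness \eqref{eq:phiodd}. If $y_1\in(-1,0)$, set $z_\ell=-y_\ell$. Then $z_{\ell+1}=-\phi(y_\ell)=\phi(-y_\ell)=\phi(z_\ell)$ with $z_1\in(0,1)$. Applying (i) to $\{z_\ell\}$ shows that $\{z_\ell\}$ increases strictly to $1$, so $\{y_\ell\}\subset[-1,0]$ decreases strictly to $-1$.

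The argument is routine, with no real obstacle. The one point needing care is identifying the limit: the fixed points of $\phi$ are exactly the integers, and since $y_\ell>y_1>0$, the limit lies in $(0,1]$. This rules out $0$ as a possible limit.
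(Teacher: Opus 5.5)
Your proposal is correct and follows essentially the same route as the paper: monotone convergence in case (i), with the limit identified as the only fixed point of $\phi$ (an integer) in $(0,1]$, then case (ii) via oddness of $\phi$ and case (iii) by direct evaluation. Your use of strict monotonicity to get $\phi(y)<\phi(1)=1$, so that the iterates stay in the open interval $(0,1)$, is slightly more careful than the paper's proof, which only records invariance of $[0,1]$ before using $\sin(\pi y_\ell)>0$.
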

\begin{proof}
Suppose first that $y_1 \in (0,1)$. Then for all $\ell \ge 1$, if  $y_{\ell} \in (0,1)$, we have
$$
y_{\ell+1} = \phi(y_{\ell}) = y_{\ell} + \frac{1}{\pi}\sin(\pi y_{\ell}) > y_{\ell},
$$
since $\sin(\pi y_{\ell})>0$ for $y_{\ell}\in(0,1)$. 
Hence, the sequence is strictly increasing as long as it remains in $(0,1)$.  
By the monotonicity of $\phi$ from Lemma~\ref{lem:phiprop}, we also have $\phi(y) \in [\phi(0),\phi(1)] = [0,1]$ for all $y \in [0,1]$, so $y_{\ell} \in [0,1]$ for all $\ell$, and the sequence is bounded above by 1. Therefore, by the monotone convergence theorem, $y_{\ell} \to y^*\in (y_{\ell},1]$. Taking the limit in the recurrence yields
$$
y^* = \lim_{\ell \to \infty} y_{\ell+1} = \lim_{\ell \to \infty} \phi(y_{\ell}) = \phi(y^*),
$$
so $y^*$ is a fixed point of $\phi$. But the only fixed points of $\phi(y) = y + \frac{1}{\pi}\sin(\pi y)$ are the integers, as $\phi(y) = y$ if and only if $\sin(\pi y) = 0$. Since $y^*\in (y_{\ell},1]$, the only possibility is $y^*=1$. 

Next, suppose $y_1 \in (-1,0)$. Define $\tilde{y}_1 = - y_1 \in (0,1)$, and let $\{\tilde{y}_\ell\}$ denote the sequence generated by the iteration $\tilde{y}_{\ell+1} = \phi(\tilde{y}_\ell)$ for $\ell \geq 1$. Now consider the sequence $\{y_\ell\}$ defined by the same recurrence $y_{\ell+1} = \phi(y_\ell)$, but with initial value $y_1$. 
By the oddness of $\phi$, it follows by induction that $y_\ell = -\tilde{y}_\ell$ for all $\ell \geq 1$. From the previous case, we know that $\tilde{y}_\ell$ is strictly increasing and converges to $1$ as $\ell \to \infty$. Therefore, $y_\ell = -\tilde{y}_\ell$ is strictly decreasing and converges to $-1$. 

Finally, if $y_1 \in \{-1, 0, 1\}$, then $\phi(y_1) = y_1$ because $\sin(\pi y_1) = 0$. Thus the sequence is constant; $y_{\ell}=y_1$ for all $\ell \geq 1$.
\end{proof}

With Lemma \ref{lem:phiprop} establishing structural properties of $\phi$, and Lemma \ref{lem:fpconv} establishing the convergence behavior of the sequence $\{ y_\ell \}$, we now derive explicit estimates on its rate of convergence.

\begin{lemma}\label{lem:ynerrestimate}
Let the sequence $\{y_\ell\}_{\ell \ge 1}$ be generated by the fixed-point iteration \eqref{eq:fpiter}, with the iteration function $\phi$ as in \eqref{eq:fpfunc} and with initial value $y_1 \in [0,1]$. Then the following estimates hold for all $\ell \ge 1$:
\begin{align}
   |1 - y_\ell| &\leq (1 - y_1)^{2^{\ell-1}}, \label{eq:ynerrest1} 
\end{align}
\end{lemma}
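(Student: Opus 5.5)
We will prove the one-step contraction estimate
\[
1-\phi(y) \le (1-y)^2, \qquad y\in[0,1],
\]
and then obtain \eqref{eq:ynerrest1} by induction on $\ell$.

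For the one-step bound, set $e=1-y\in[0,1]$. Since $\sin(\pi(1-e))=\sin(\pi e)$,
\[
1-\phi(y) = e - \tfrac{1}{\pi}\sin(\pi e).
\]
It therefore suffices to show
\[
\psi(e) := e^2 - e + \tfrac{1}{\pi}\sin(\pi e) \ge 0 \qquad \text{on } [0,1].
\]
We have $\psi(0)=\psi(1)=0$. Moreover,
\[
\psi''(e) = 2 - \pi\sin(\pi e).
\]
Since $\sin(\pi e)$ is concave on $[0,1]$, the function $\psi''$ is convex there. We also have $\psi''(0)=\psi''(1)=2>0$ and $\psi''(1/2)=2-\pi<0$. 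Hence $\psi''$ vanishes at exactly two points $e_1<1/2<e_2$, which are symmetric about $1/2$.

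This gives the shape of $\psi$:
\begin{itemize}
\item $\psi'(0) = -1+1 = 0$ and $\psi''>0$ near $0$, so $\psi$ is convex there and $\psi>0$ just to the right of $0$.
\item By the symmetry $\psi(1-e)=\psi(e)$, likewise $\psi>0$ just to the left of $1$.
\item $\psi$ is concave on $[e_1,e_2]$, so $\psi \ge \min(\psi(e_1),\psi(e_2)) = \psi(e_1)$ on that interval.
\item On $[0,e_1]$, $\psi$ is convex with $\psi(0)=0$ and $\psi'(0)=0$, so $\psi\ge 0$ there, and in particular $\psi(e_1)\ge 0$.
\end{itemize}
By symmetry $\psi\ge0$ on $[e_2,1]$ as well, so $\psi\ge 0$ on all of $[0,1]$. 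As a check, $\psi(1/2)=-1/4+1/\pi>0$. This convexity bookkeeping is the main obstacle. The delicate point is that $\psi$ vanishes to second order at $0$; the fallback is the direct bound $\sin t\ge t - t^3/6$.

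For the induction, the case $\ell=1$ is trivial. Lemma~\ref{lem:fpconv} keeps $y_\ell\in[0,1]$ and gives $y_\ell\le 1$, so $|1-y_\ell|=1-y_\ell$. Then
\[
1-y_{\ell+1} \le (1-y_\ell)^2 \le \bigl((1-y_1)^{2^{\ell-1}}\bigr)^2 = (1-y_1)^{2^\ell}.
\]
Squaring is monotone on $[0,1]$, which justifies the second inequality. The cases $y_1\in\{0,1\}$ need no separate treatment, since the inequality holds trivially for them.
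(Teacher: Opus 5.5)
Your proof is correct, and its skeleton matches the paper's. You reduce to the one-step bound $\tilde\phi(\varepsilon)=\varepsilon-\frac1\pi\sin(\pi\varepsilon)\le\varepsilon^2$ on $[0,1]$, note that the iterates stay in $[0,1]$, and induct.

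Where you differ is in how you prove that one-step inequality.

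\begin{itemize}
\item \textbf{The paper's route.} It uses the third-order Taylor bound $|\tilde\phi(\varepsilon)|\le\frac{\pi^2}{6}|\varepsilon|^3$. On $[0,1/2]$ this gives $\tilde\phi(\varepsilon)\le\frac{\pi^2}{12}\varepsilon^2\le\varepsilon^2$. The bound is then transferred to $[1/2,1]$ through the reflection identity $\tilde\phi(\varepsilon)=\tilde\phi(1-\varepsilon)-1+2\varepsilon$, which is exactly your symmetry $\psi(1-e)=\psi(e)$. Your stated fallback $\sin t\ge t-t^3/6$ is precisely this route.
\item \textbf{Your route.} You analyze the shape of $\psi(e)=e^2-\tilde\phi(e)$. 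Convexity of $\psi''$ confines the concave region to a symmetric interval $[e_1,e_2]$. The tangent line at the double zero $e=0$ handles $[0,e_1]$, and concavity together with the symmetry handles the middle piece.
\end{itemize}

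Your argument is purely qualitative: it never needs the numerical comparison $\pi^2<12$, at the cost of some case bookkeeping. The paper's Taylor route is shorter. As a by-product it exhibits the local cubic contraction $\tilde\phi(\varepsilon)\le\frac{\pi^2}{6}\varepsilon^3$, although only the quadratic bound is used in the lemma.

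Finally, you obtain $1-y_\ell\ge 0$ from Lemma~\ref{lem:fpconv}, while the paper obtains it from $\phi\le 1$ in Lemma~\ref{lem:phiprop}. These are equivalent.
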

\begin{proof}
Let $\varepsilon_\ell :=1-y_\ell$.
Then
$$
\varepsilon_{\ell+1} = 1-\phi(1-\varepsilon_\ell) = 
\varepsilon_\ell - \frac{1}\pi \sin\left({\pi}\varepsilon_\ell \right)=:  \tilde\phi(\varepsilon_\ell).
$$
Taylor expansion of $\tilde{\phi}$ around $0$ gives:
$$
\tilde{\phi}(\varepsilon)=\tilde{\phi}(0)
+ \varepsilon \tilde{\phi}'(0)+\frac{\varepsilon^2}2\tilde{\phi}''(0)
+\frac{\varepsilon^3}6\tilde{\phi}'''(\xi)
=\varepsilon^3\frac{\pi^2}6\cos(\xi), \qquad \text{for some } \xi \in (0, |\varepsilon|\pi).
$$
Thus, for all $\varepsilon$,
\begin{equation}\label{eq:phitildeest}
|\tilde{\phi}(\varepsilon)| \leq \frac{\pi^2}{6} |\varepsilon|^3.
\end{equation}
To prove \eqref{eq:ynerrest1}, we first show that $0\leq \tilde{\phi}(\varepsilon)\leq \varepsilon^2$ for $\varepsilon\in[0,1]$. 
The lower bound comes from 
the inequality $\phi(\varepsilon)\leq 1$
from \eqref{eq:phiprop1} in Lemma~\ref{lem:phiprop}.
The upper bound follows from
\eqref{eq:phitildeest} and
 the fact that $\tilde{\phi}(\varepsilon)
=\tilde{\phi}(1-\varepsilon)-1+2\varepsilon$,
$$
\tilde{\phi}(\varepsilon)\leq 
\begin{cases}
 \frac{\pi^2}{6} \frac{\varepsilon^2}{2}, & \varepsilon\in[0,1/2], \\
 \frac{\pi^2}{6} \frac{(1-\varepsilon)^2}{2}-1+2\varepsilon, & \varepsilon\in[1/2,1],
\end{cases}
\leq
\begin{cases}
 \varepsilon^2, & \varepsilon\in[0,1/2], \\
 (1-\varepsilon)^2
-1+2\varepsilon, & \varepsilon\in[1/2,1],
\end{cases}
=\varepsilon^2.
$$
We use induction to prove
\eqref{eq:ynerrest1}. It holds trivially for $\ell = 1$. Assume it holds for $k = 1, \ldots, \ell$. Since $y_1 \in [0,1]$, Lemma~\ref{lem:fpconv} guarantees that $\varepsilon_k \in [0,1]$ for all $k$. Then,
\[
\varepsilon_{\ell+1} = \tilde{\phi}(\varepsilon_\ell) \le \varepsilon_\ell^2 \le \left(\varepsilon_1^{2^{\ell-1}}\right)^2 
= \varepsilon_1^{2^\ell},
\]
which completes the induction and proves \eqref{eq:ynerrest1}. 
\end{proof}

We are now ready to prove Theorem \ref{thm:step}.

\medskip
\noindent
{\bf Proof of Theorem \ref{thm:step}.} 
We proceed by induction to show that \eqref{eq:fnprop1}--\eqref{eq:fnodd} hold. It is straightforward to verify that these properties are satisfied by $f_1$. 
Assuming they hold for $f_k$ for all $k = 1, \ldots, \ell$, we obtain
\[
f_{\ell+1}(x) = \phi(f_\ell(x)) \in [-1,1],
\]
by \eqref{eq:phiprop1}. Moreover, since $f_\ell$ is strictly increasing and $\phi$ is strictly increasing by \eqref{eq:phiprop2}, their composition $f_{\ell+1}$ is also strictly increasing on $[-1,1]$. Finally, since $f_\ell$ is odd by assumption and $\phi$ is odd by \eqref{eq:phiodd}, we have
\[
f_{\ell+1}(-x) = \phi(f_\ell(-x)) = \phi(-f_\ell(x)) = -\phi(f_\ell(x)) = -f_{\ell+1}(x),
\]
so $f_{\ell+1}$ is odd. 
By induction, the properties \eqref{eq:fnprop1}-\eqref{eq:fnodd} hold for all $\ell$.

The convergence \eqref{eq:fnconv} when $x\in(-1,1)$ follows directly from Lemma~\ref{lem:fpconv} and the fact that $f_1(x)\in(-1,1)$ when $x\in(-1,1)$. 
At the integer points $x \in \{-1, 0, 1\}$, the result is immediate since these are fixed points of $\phi$.

To establish the norm error estimate \eqref{eq:fnerrestimate}, we 
use the oddness property \eqref{eq:fnodd}, 
the inequality
\begin{equation}\label{eq:initdataest}
|1-f_1(x)|\leq |1-x|,\qquad 0\leq x\leq 1,
\end{equation}
and the estimate \eqref{eq:ynerrest1} to obtain
\begin{align*}
\int_{-1}^1 \left| \sgn(x) - f_\ell(x) \right|^p \, dx 
&= 2 \int_0^1 \left| 1 - f_\ell(x) \right|^p \, dx 
\leq 2 \int_0^1 (1 - f_1(x))^{2^{\ell - 1}p} \, dx \\
&\leq 2 \int_0^1 (1 - x)^{2^{\ell - 1}p} \, dx 
= 2 \int_0^1 x^{2^{\ell - 1}p} \, dx 
= \frac{2}{2^{\ell - 1}p + 1}
\leq \frac{4}{p} 2^{-\ell}.
\end{align*}
This proves \eqref{eq:fnerrestimate} with $C_p = (4/p)^{1/p}$.

\begin{comment}
{\color{purple}
MM: Things to consider adding here or moving to Section \ref{sec:numerics}:

\begin{enumerate}
    \item A numerical comparison with Fourier series approximation and a convergence plot
    \item If we decide to include this numerical experiment here, then maybe we should split this section into two: theory and numerics?
    \item Alternatively, we may put this numerical study in Section \ref{sec:numerics}.
\end{enumerate}
}
\end{comment}

%%%%%%%%%%%%%%%%%%%%%%%%%%%%%%%%%%%%%
\section{Generalization: Approximating Piecewise-Smooth Functions}
\label{sec:generalization}

The theory developed in Section~\ref{sec:step} demonstrates that a deep Fourier network can approximate a simple non-periodic step function on the interval $[-1,1]$ with exponential accuracy, without exhibiting Gibbs oscillations. This surprising result motivates us to investigate whether similar approximation guarantees hold for broader classes of functions, particularly those that are not smooth or periodic.

Classical Fourier analysis tells us that high-order approximation typically requires both smoothness and periodicity. However, the step function violates both assumptions, yet we demonstrated that a Fourier network, despite being built from periodic activation functions, can still yield exponential convergence. Note that, while the constructed Fourier network for the step function has the same periodicity as the initial data $f_1(x)=\sin(\pi x/2)$, namely $4$, it is not periodic on our interval of interest $[-1,1]$.

In this section, we extend our analysis to a broader class of functions. We first remove the periodicity assumption in Section~\ref{sec:nonperiodic}, focusing on smooth but non-periodic functions on $[-1,1]$. Then, in Section~\ref{sec:nonsmooth}, we further relax the smoothness assumption by considering piecewise-smooth functions with a single point of discontinuity.

\paragraph{Notation.}
Throughout this section, we use lowercase letters to denote general functions and uppercase letters to denote Fourier network approximations. When needed, subscripts indicate architectural parameters such as depth and width. The parameter $L$ denotes the depth of a Fourier network, while $W$ denotes the width of hidden layers in the sense of Section~2. In addition, we use $N$ to denote the number of Fourier modes in a representation, which corresponds to the width of a shallow Fourier network and determines the approximation resolution (see Theorem~\ref{thm:nonuniformFourier}). In our constructions, $W$ and $N$ are related but play distinct roles: $W$ controls the architectural width of the network, while $N$ reflects the number of effective Fourier terms used in the approximation.

\subsection{Non-periodic smooth functions}
\label{sec:nonperiodic}

We begin by removing the periodicity assumption while retaining smoothness. Specifically, we consider functions that are smooth but not necessarily periodic on the interval $[-1,1]$.

With standard truncated Fourier sums (using integer wave numbers), spectral convergence breaks down once periodicity is lost---even for analytic functions. In such cases, the series converges only linearly on compact subsets of $(-1,1)$. The loss of uniform convergence on the full interval $[-1,1]$ is accompanied by the well-known Gibbs phenomenon near the endpoints $x = \pm 1$; see, e.g., \cite{GottliebShu1997,AdcockHansenShadrin2014}.

Our main result, stated in Theorem~\ref{thm:nonuniformFourier}, shows that high-order approximation can be achieved for sufficiently smooth functions defined on a finite interval but not necessarily periodic, provided that the Fourier expansion includes non-integer wave numbers. This construction corresponds to a single-layer Fourier network and does not require network depth. 
We note that non-periodic functions can also be approximated using filtering techniques, provided that a suitable smooth extension is available; see, e.g., \cite{GottliebShu1997}. However, such approaches require additional assumptions and constructions, whereas the present framework operates directly on the given function.

We will use the following notation to characterize smooth, bounded, and periodic functions.
{
\begin{definition}\label{def:space_smooth}
Let \(m \ge 0\) be an integer. We say that a function
$f\in C^m(a,b)$ belongs to \(C_{\mathrm{cl}}^m(a,b)\) if
all derivatives \(f^{(k)}\), \(0 \le k \le m\) have well-defined finite one-sided limits at \(x=a\) and \(x=b\).
Moreover, we denote by $C^m_{\rm per}(a,b)$ the functions
$f\in C_{\mathrm{cl}}^m(a,b)$ for which
$$
\lim_{x\to a^+} f^{(k)}(x)
  =\lim_{x\to b^-} f^{(k)}(x),\qquad k=0,1,\ldots, m.
$$
\end{definition}
}

\begin{theorem}\label{thm:nonuniformFourier} Let {$m \ge 1$} be an integer, and suppose $f\in {C_{\mathrm{cl}}^m}(-1,1)$, as in Definition~\ref{def:space_smooth}. There exist complex coefficients $\{ \hat{f}_k \} \subset {\mathbb C}$ and real frequencies $\{ \omega_k \} \subset {\mathbb R}$ such that the approximation
$$
F_W(x) = \sum_{k=1}^{N} \hat{f}_k e^{i\omega_k x}, \qquad N = W + 2(m+1),
$$
satisfies the error bound
\begin{equation}\label{eq:estimate_non_periodic}
\| f - F_W  \|_{L^2(-1,1)}\leq C_m W^{-m+1/2},
\end{equation}
for some constant $C_m>0$. Moreover,
$|\hat{f}_k|\leq D_m$ for all $k$.
Both $C_m$ and $D_m$ depend on $f$ and $m$, but
not on $W$.
The frequencies $\{\omega_k\}$
are independent of $f$.
\end{theorem}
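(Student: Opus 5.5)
The plan is to split $f$ into a part that can be made periodic using finitely many non-integer frequencies, plus a remainder that is periodic and smooth enough for classical Fourier truncation. The abstract points to Hermite interpolation by trigonometric polynomials, so I would take the correction to be a trigonometric polynomial $p(x)=\sum_{j=1}^{2(m+1)} \alpha_j e^{i\nu_j x}$. Its frequencies $\nu_j$ are fixed, independent of $f$, and chosen \emph{non-integer relative to period $2$}, for instance $\nu_j=\pi(j-1)/4$, or half-integers times $\pi$. The coefficients $\alpha_j$ are chosen so that the difference $r:=f-p$ satisfies the periodic matching conditions
$$
\lim_{x\to -1^+} r^{(k)}(x)=\lim_{x\to 1^-} r^{(k)}(x),\qquad k=0,1,\dots,m.
$$
That gives $m+1$ conditions. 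I would make the problem square and uniquely solvable by prescribing the one-sided values $r^{(k)}(\pm1)=0$ for all $k\le m$. This is a Hermite interpolation problem at the two points $\pm1$ with multiplicity $m+1$ each, hence $2(m+1)$ unknowns. The data are the finite one-sided limits $f^{(k)}(\pm1)$, which exist since $f\in C^m_{\mathrm{cl}}(-1,1)$.

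The key algebraic step is to show that the $2(m+1)\times 2(m+1)$ confluent system is nonsingular for the chosen frequencies. Its matrix has entries $(i\nu_j)^k e^{\pm i\nu_j}$. Nonsingularity is equivalent to the statement that no nonzero exponential polynomial $\sum_j \beta_j e^{i\nu_j x}$ vanishes to order $m+1$ at both $x=-1$ and $x=1$. I expect this to be the main obstacle. My first attempt would be to rescale and pick $\nu_j=\theta (j-1)$ with a small $\theta$, for instance $\theta<\pi/(2m+2)$. Then $z=e^{i\theta x}$ turns the function into an ordinary algebraic polynomial of degree $2m+1$ in $z$. Such a polynomial cannot have two distinct roots $e^{\mp i\theta}$ each of multiplicity $m+1$ unless it is zero. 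The multiplicity in $x$ transfers to $z$ because $z'(x)\neq0$. This settles invertibility, and since the matrix depends only on $m$, the bound $|\alpha_j|\le D_m$ follows with $D_m$ depending on $m$ and on $\max_k|f^{(k)}(\pm1)|$.

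Next I handle the periodic remainder. The function $r$ lies in $C^m(-1,1)$, and its derivatives up to order $m$ vanish at both ends. Its $2$-periodic extension is therefore $C^{m}$; to be safe I would only use that it is $C^{m-1}$ with $r^{(m)}$ bounded and piecewise continuous. Integrating by parts $m$ times, with no boundary terms, gives Fourier coefficients $\hat r_n=\frac{1}{(i\pi n)^m}\widehat{r^{(m)}}_n$. By Bessel, $\sum_n |n|^{2m}|\hat r_n|^2\le C\|r^{(m)}\|_{L^2}^2$. Truncating to $|n|\le W/2$, that is, $W$ integer modes $\omega=\pi n$, leaves the tail
$$
\sum_{|n|>W/2}|\hat r_n|^2\le C W^{-2m}\|r^{(m)}\|^2,
$$
so the $L^2$ error is $O(W^{-m})$, which is better than the stated $W^{-m+1/2}$. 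The coefficients satisfy $|\hat r_n|\le \|r\|_{L^1}/2$, bounded independently of $W$.

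Finally I set $F_W=p+S_W r$. This uses $W+2(m+1)$ exponentials with frequencies independent of $f$, and if some $\nu_j$ coincides with an integer mode the coefficients are simply added. Then $f-F_W=r-S_Wr$, which gives \eqref{eq:estimate_non_periodic}. The weaker exponent $-m+1/2$ leaves room for a cruder argument, such as a pointwise coefficient bound $|\hat r_n|\lesssim |n|^{-m}$ summed over the tail, if the Bessel route needs more regularity than is available at the endpoints.
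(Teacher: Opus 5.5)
Your proof is correct. It uses the same decomposition as the paper: subtract a trigonometric polynomial with $2(m+1)$ fixed frequencies that Hermite-interpolates $f^{(k)}(\pm1)$ for $k\le m$, then truncate the Fourier series of the periodic remainder. Both supporting steps, however, are done differently.

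For solvability of the interpolation problem, the paper cites Delvos's theorem on $\pi$-antiperiodic trigonometric Hermite interpolation and rescales it to the frequencies $(2k+1)\pi/4$, $-m-1\le k\le m$. Your root-counting argument in $z=e^{i\theta x}$ is self-contained. It covers any arithmetic progression with spacing $\theta\notin\pi\mathbb{Z}$; the paper's set is the case $\theta=\pi/2$ after factoring out the nonvanishing $e^{-i(2m+1)\pi x/4}$.

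For the remainder, the paper sums the pointwise bound $|\hat g_k|\le d_m|k|^{-m}$ over the tail, which is where the exponent $-m+1/2$ comes from. Your Parseval argument applied to $r^{(m)}$ gives $\|r-S_Wr\|_{L^2(-1,1)}\le (2/(\pi W))^m\|r^{(m)}\|_{L^2(-1,1)}$. This is a factor $W^{1/2}$ better, matches the $W^{-m}$ rate the paper's numerical section calls expected, and uses the boundary matching only up to order $m-1$.

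One caution: the condition that matters is $e^{i\theta}\neq e^{-i\theta}$, not that the frequencies avoid $\pi\mathbb{Z}$. Your parenthetical alternative ``half-integers times $\pi$'' fails if it means $\pi(\mathbb{Z}+\tfrac12)$. Those exponentials are all $2$-antiperiodic, so every such $p$ satisfies $p^{(k)}(1)=-p^{(k)}(-1)$ and the Hermite system is singular. This is exactly the coincidence $e^{i\theta}=e^{-i\theta}$, with $\theta=\pi$, that your argument excludes. Your actual choice $\nu_j=\theta(j-1)$ with small $\theta>0$ is unaffected.
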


This result quantifies the approximation behavior in terms of the available smoothness of the target function. The algebraic convergence rate in \eqref{eq:estimate_non_periodic} reflects the number of bounded derivatives. In particular, as the smoothness increases, the convergence rate improves accordingly, and for sufficiently smooth functions this yields spectral convergence in the number of modes.

\begin{remark}
The construction in Theorem~\ref{thm:nonuniformFourier} uses a total of $N = W + 2(m+1)$ Fourier modes, where the additive offset depends on the smoothness parameter $m$. Since this offset is independent of $W$, it does not affect the asymptotic approximation rate for fixed $m$ and can be absorbed into the width parameter for sufficiently large $N$, cf.\mbox{}
Corollary~\ref{cor:spectral}.
\end{remark}

To prove Theorem \ref{thm:nonuniformFourier}, we rely on two classical results. 
First, if a function has $m$ continuous and periodic derivatives,
then its truncated Fourier series with $W$ terms approximates it with
$L^2$-error of order $\mathcal{O}(W^{-m})$, as summarized below.

\begin{proposition}\label{prop:Fourier}
Let $m\geq 1$ and suppose
$g \in C_{\text{per}}^m(-1,1)$, as in Definition~\ref{def:space_smooth}.
Let
$$
G_W(x) = \sum_{|k| \leq W/2} \hat{g}_k \, e^{i k \pi x}, \qquad \hat{g}_k = \frac{1}{2} \int_{-1}^1 g(x) \, e^{-i k \pi x} \, dx,
$$
be the truncated $W$-term Fourier approximation of $g$. Then there exists a constant $C_m > 0$, which depends on $g$ and $m$ but is independent of $W$, such that
$$
\| g - G_W \|_{L^2(-1,1)} \leq C_m \, W^{-m+1/2}.
$$
\end{proposition}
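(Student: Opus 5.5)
The plan is to bound the Fourier coefficients by repeated integration by parts and then apply Parseval's identity to the tail of the series. Since $g\in C^m_{\rm per}(-1,1)$, each derivative $g^{(j)}$, $0\le j\le m$, is continuous on $(-1,1)$, extends continuously to $[-1,1]$, and has equal one-sided limits at the two endpoints. Moreover $g^{(m)}$ is bounded, hence in $L^2(-1,1)$.

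\emph{Step 1: coefficient bounds.} For $k\neq 0$ we integrate by parts in $\hat g_k=\frac12\int_{-1}^1 g(x)e^{-ik\pi x}\,dx$. The boundary term is
$$
\frac{1}{-ik\pi}\Bigl[g(x)e^{-ik\pi x}\Bigr]_{-1}^{1}.
$$
Because $e^{-ik\pi}=e^{ik\pi}$ and $g(1^-)=g(-1^+)$, this term vanishes. The integral is over the open interval, so the integration by parts should be done on $[-1+\delta,1-\delta]$ and then $\delta\to0$ taken, using the one-sided limits of $g$ and $g'$. Repeating this $m$ times, with the periodicity of $g^{(j)}$ for $j\le m-1$ killing each boundary term, gives
$$
\hat g_k=(ik\pi)^{-m}\,\widehat{(g^{(m)})}_k .
$$

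\emph{Step 2: tail estimate.} The system $\{e^{ik\pi x}/\sqrt2\}$ is orthonormal and complete in $L^2(-1,1)$, so Parseval gives
$$
\|g-G_W\|_{L^2}^2=2\sum_{|k|>W/2}|\hat g_k|^2 .
$$
For the crudest version, bound $|\widehat{(g^{(m)})}_k|\le \tfrac12\|g^{(m)}\|_{L^1}\le \|g^{(m)}\|_{L^\infty}=:M$. Then
$$
\|g-G_W\|_{L^2}^2\le 4M^2\pi^{-2m}\sum_{k>W/2}k^{-2m}\le C\,W^{-2m+1},
$$
where we compare the sum with $\int_{W/2-1}^{\infty}t^{-2m}\,dt$, valid since $m\ge1$ and $W\ge1$; small $W$ is absorbed into the constant. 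Taking square roots gives $C_mW^{-m+1/2}$, with $C_m$ depending on $\|g^{(m)}\|_\infty$ and $m$ only.

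Alternatively, Bessel's inequality for $g^{(m)}$ gives the sharper rate $W^{-m}$, which is also acceptable. The main care point is Step 1's endpoint handling: one needs $g^{(m)}$ integrable with one-sided limits for all $j\le m-1$ and matching values. Definition~\ref{def:space_smooth} supplies exactly these, so no real obstacle is expected.
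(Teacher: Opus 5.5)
Your proposal is correct and follows essentially the same route as the paper: $m$-fold integration by parts with the boundary terms removed by periodicity, giving $|\hat g_k|\le \pi^{-m}\|g^{(m)}\|_{L^\infty}|k|^{-m}$, followed by Parseval and a tail-sum bound of order $W^{-2m+1}$. Two of your additions are correct refinements that the paper omits. You track the factor $2$ in Parseval under the normalization $\hat g_k=\frac12\int_{-1}^1 g\,e^{-ik\pi x}\,dx$ and over the two-sided tail, which only changes constants. Your Bessel-inequality variant sharpens the rate to $W^{-m}$, which is the rate the paper itself quotes in its numerical section.
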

\begin{proof} 
With $g \in C_{\text{per}}^m(-1,1)$ we can integrate
the formula for $\hat{g}_k$ by parts $m$ times.
All boundary terms vanish due to periodicity, giving the estimate
$$
  |\hat{g}_k|\leq d_m|k|^{-m}, \qquad d_m=\pi^{-m}||g^{(m)}||_{L^\infty(-1,1)}.
$$
For details, see Section 2.9, in particular Theorem 4, of \cite{Boyd2000}. 
Parseval's identity then gives
\[
\| g - G_W \|_{L^2(-1,1)}^2 = \sum_{|k| > W/2} |\hat{g}_k|^2 \le 
d_m^2 \sum_{k > W/2} k^{-2m},
\]
and the tail sum satisfies 
$\sum_{k > W/2} k^{-2m} \leq  d'_m W^{-2m +1}$. Taking square roots yields the desired estimate with $C_m = d_m\sqrt{d'_m}$.
\end{proof}

Second, we use the following result on trigonometric Hermite interpolation, taken from \cite{Delvos1993}.

\begin{proposition}[Proposition 4.2 in \cite{Delvos1993}]\label{prop:Hermite}
Let $0 \leq x_1 < x_2 < \cdots < x_n < \pi$ be $n$ distinct real numbers, and assume that $n$ is even. For any set of complex numbers $\{y_{j,s}\}$ with $1 \leq j \leq n$ and $0 \leq s \leq m$, there exists a unique $\pi$-antiperiodic trigonometric polynomial $S(x)$ of order $N-1$ (i.e., consisting of $N$ terms),
$$
S(x) = \sum_{k=-N/2}^{N/2-1} a_{2k+1} e^{i(2k+1)x}, \qquad N = n(m+1),
$$
with complex coefficients $a_{2k+1} \in \mathbb{C}$, that satisfies the Hermite interpolation conditions
$$
S^{(s)}(x_j) = y_{j,s}, \qquad \text{for } \ 1 \leq j \leq n, \ \ 0 \leq s \leq m.
$$
\end{proposition}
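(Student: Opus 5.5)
The plan is to reduce the statement to classical algebraic Hermite interpolation on the unit circle via the substitution $z=e^{2ix}$. Since $2k+1$ runs over the odd integers from $-N+1$ to $N-1$ as $k$ ranges from $-N/2$ to $N/2-1$, reindex with $k'=k+N/2\in\{0,\dots,N-1\}$. This gives $2k+1=2k'-(N-1)$, so every trigonometric polynomial of the stated form can be written uniquely as
$$
S(x)=e^{-i(N-1)x}\,P\bigl(e^{2ix}\bigr),\qquad P(z)=\sum_{k'=0}^{N-1}a_{2k'-N+1}\,z^{k'},
$$
with $P$ an arbitrary complex algebraic polynomial of degree at most $N-1$. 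The map between the coefficient vectors is the identity after reindexing. The parity assumption is used only to make $N=n(m+1)$ even, so that the index range $-N/2\le k\le N/2-1$ makes sense when $m+1$ is odd. Because $0\le x_1<\dots<x_n<\pi$, the points $z_j=e^{2ix_j}$ are distinct points on the unit circle.

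The next step is to show that the Hermite data for $S$ at $x_j$ correspond bijectively to Hermite data for $P$ at $z_j$. By the chain rule, $\frac{d^s}{dx^s}P(e^{2ix})=\sum_{r\le s}c_{s,r}(x)P^{(r)}(e^{2ix})$, where the leading coefficient is $c_{s,s}=(2ie^{2ix})^s\neq 0$. Combining this with the Leibniz rule for the prefactor $e^{-i(N-1)x}$ gives
$$
S^{(s)}(x_j)=\sum_{r=0}^{s}\beta_{j,s,r}\,P^{(r)}(z_j),\qquad \beta_{j,s,s}=e^{-i(N-1)x_j}(2iz_j)^s\neq 0 .
$$
So for each fixed $j$, the vector $(S^{(s)}(x_j))_{s=0}^m$ is the image of $(P^{(r)}(z_j))_{r=0}^m$ under a lower triangular matrix with nonzero diagonal, which is invertible. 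Prescribing $S^{(s)}(x_j)=y_{j,s}$ is therefore equivalent to prescribing $P^{(r)}(z_j)=\tilde y_{j,r}$ for uniquely determined values $\tilde y_{j,r}$.

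It then remains to solve the algebraic Hermite problem: find $P$ of degree at most $N-1$ with $P^{(r)}(z_j)=\tilde y_{j,r}$ for $1\le j\le n$ and $0\le r\le m$. This is a square linear system, with $N=n(m+1)$ equations in $N$ unknown coefficients, so it suffices to prove injectivity. If all data vanish, then $P$ has a zero of multiplicity at least $m+1$ at each of the $n$ distinct points $z_j$. That is at least $N$ zeros counted with multiplicity, which exceeds $\deg P\le N-1$, so $P\equiv 0$. Hence the system is uniquely solvable, and transforming back gives existence and uniqueness of $S$.

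The only step requiring real care is the second one, namely checking that the derivative transformation is triangular with nonzero diagonal. This needs $dz/dx=2iz\neq0$ and a nonvanishing prefactor, both of which hold on the unit circle. Everything else is standard counting.
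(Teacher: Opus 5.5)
Your proof is correct. Note that the paper gives no proof of this proposition. It is cited from \cite{Delvos1993}, and the paper only uses its existence part, through Proposition~\ref{prop:Hermite2} with $n=2$ and nodes $0,\pi/2$. So your argument is a self-contained replacement for a citation, not an alternative to an argument in the text.

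The substitution $S(x)=e^{-i(N-1)x}P(e^{2ix})$ does all the work:
\begin{itemize}
\item it identifies the $N$-dimensional space of $\pi$-antiperiodic polynomials with the complex polynomials of degree at most $N-1$;
\item it sends nodes in $[0,\pi)$ to distinct points $z_j$ on the unit circle;
\item through the lower-triangular change of derivative data, it reduces the problem to classical algebraic Hermite interpolation, where unisolvence follows from counting zeros.
\end{itemize}
The triangular change of data has the same chain-rule structure the paper itself uses in Lemma~\ref{lem:axuiliary}.

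The citation keeps the paper short. Your route, in exchange, makes clear what each hypothesis is for:
\begin{itemize}
\item evenness of $n$ is needed only so that $N=n(m+1)$ is even and the symmetric index range is defined, so odd $n$ is fine when $m$ is odd;
\item the condition $x_n<\pi$ is exactly what makes the $z_j$ distinct;
\item the same argument works for any block of $N$ consecutive odd frequencies, by factoring out $e^{i(2k_0+1)x}$ instead of $e^{-i(N-1)x}$.
\end{itemize}
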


For our purposes, it is convenient to restate this result in a form adapted to endpoint interpolation over the symmetric interval $[-1,1]$.

\begin{proposition}\label{prop:Hermite2}
Let $\{(\alpha_s, \beta_s)\}_{s=0}^m$ be any collection of complex numbers. Then there exists a trigonometric polynomial $H(x)$ on the interval $[-1,1]$, consisting of $N = 2(m+1)$ terms,
\begin{equation}\label{eq:Hform}
H(x) = \sum_{k=-N/2}^{N/2-1} c_{2k+1} \, e^{i\frac{2k+1}{4} \pi x}, \qquad c_{2k+1} \in \mathbb{C},
\end{equation}
that satisfies the Hermite interpolation conditions at the boundary points $x_1 = -1$ and $x_2 = 1$:
$$
H^{(s)}(-1) = \alpha_s, \qquad H^{(s)}(1) = \beta_s, \qquad \text{for } 0 \leq s \leq m.
$$
\end{proposition}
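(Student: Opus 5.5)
The plan is to deduce Proposition~\ref{prop:Hermite2} from Proposition~\ref{prop:Hermite} by an affine change of variables that sends the endpoints $\pm1$ to two distinct nodes in $[0,\pi)$. We will take $n=2$, which is even as Proposition~\ref{prop:Hermite} requires. This gives $N=n(m+1)=2(m+1)$, matching the statement. In Proposition~\ref{prop:Hermite} the exponentials are $e^{i(2k+1)t}$, while in \eqref{eq:Hform} they are $e^{i\frac{2k+1}{4}\pi x}$. This suggests the substitution $t=\frac{\pi}{4}x+c$ for a suitable shift $c$.

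The first step is to choose the shift. The scaling $\pi/4$ maps $[-1,1]$ to an interval of length $\pi/2$, so we need $c$ with $-\pi/4+c\ge 0$ and $\pi/4+c<\pi$. The choice $c=\pi/4$ works: it sends $x=-1$ to $t_1=0$ and $x=1$ to $t_2=\pi/2$, and these satisfy $0\le t_1<t_2<\pi$.

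Next, we transform the data. Write $H(x)=S(\frac{\pi}{4}x+\frac{\pi}{4})$. By the chain rule, $H^{(s)}(x)=(\pi/4)^s S^{(s)}(t)$. So we prescribe
$$
y_{1,s}=(4/\pi)^s\alpha_s,\qquad y_{2,s}=(4/\pi)^s\beta_s,\qquad 0\le s\le m .
$$
Proposition~\ref{prop:Hermite} then gives $S(t)=\sum_{k=-N/2}^{N/2-1}a_{2k+1}e^{i(2k+1)t}$ satisfying $S^{(s)}(t_j)=y_{j,s}$. Consequently $H^{(s)}(-1)=\alpha_s$ and $H^{(s)}(1)=\beta_s$ for all $0\le s\le m$.

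Finally, we check that $H$ has the form \eqref{eq:Hform}. Substituting gives
$$
e^{i(2k+1)(\frac{\pi}{4}x+\frac{\pi}{4})}=e^{i(2k+1)\pi/4}\,e^{i\frac{2k+1}{4}\pi x}.
$$
Setting $c_{2k+1}=a_{2k+1}e^{i(2k+1)\pi/4}\in\mathbb{C}$ therefore yields exactly \eqref{eq:Hform} with $N=2(m+1)$ terms.

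There is no real obstacle here. The only points needing care are that the nodes land in $[0,\pi)$ with the left node allowed at $0$, and that the derivative data are rescaled by the factors $(4/\pi)^s$. As an alternative, a symmetric shift such as $t=\frac{\pi}{4}x+\frac{\pi}{2}$ would also work, since its nodes $\pi/4$ and $3\pi/4$ lie in $[0,\pi)$; only the phase factors in $c_{2k+1}$ change.
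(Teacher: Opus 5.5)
Your proposal is correct and matches the paper's proof essentially step for step. You apply Proposition~\ref{prop:Hermite} with $n=2$ nodes $0$ and $\pi/2$, rescale the derivative data by $(4/\pi)^s$, set $H(x)=S\bigl((x+1)\pi/4\bigr)$, and take $c_{2k+1}=a_{2k+1}e^{i(2k+1)\pi/4}$, exactly as the paper does.
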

\begin{proof}
This follows almost directly from Proposition~\ref{prop:Hermite} by choosing $n = 2$ interpolation points, namely $x_1 = 0$ and $x_2 = \pi/2$, and constructing a trigonometric polynomial
$$
S(x) = \sum_{k=-N/2}^{N/2-1} a_{2k+1} e^{i(2k+1)x}, \qquad N = 2(m+1),
$$
on the interval $[0, \pi/2]$ that satisfies the Hermite conditions,
$$
  S^{(s)}(0)=\left(\frac{4}{\pi}\right)^s\alpha_s, \qquad
  S^{(s)}(\pi/2)=\left(\frac{4}{\pi}\right)^s\beta_s, \qquad s=0,\ldots, m.
$$
The desired trigonometric polynomial is then $H(x)=S((x+1)\pi/4)$,
which is of the form \eqref{eq:Hform}
with $c_{2k+1}=a_{2k+1}\exp(i(2k+1)\pi/4)$.
\end{proof}

With these results in hand, we are now ready to prove Theorem~\ref{thm:nonuniformFourier}.

\medskip
\noindent
{\bf Proof of Theorem \ref{thm:nonuniformFourier}.} 
Let $H$ be the trigonometric polynomial constructed in Proposition~\ref{prop:Hermite2}, which interpolates the endpoint derivatives of $f$ up to order $m$, {which exist since $f\in C_{\mathrm{cl}}^m(-1,1)$,}
\[
H^{(s)}(-1) = \alpha_s := \lim_{x \to -1^+} f^{(s)}(x), \qquad
H^{(s)}(1) = \beta_s := \lim_{x \to 1^-} f^{(s)}(x), \qquad 0 \le s \le m.
\]
We note that $H$
only depends on $f$ and $m$. Its coefficients can therefore be bounded as
$|c_{2k+1}|\leq d_m$, where $d_m$
depends on $f$ and $m$.

Define the difference $g := f - H$. 
Since $H$ matches the endpoint derivatives of $f$ up to order $m$, it follows that for each $0 \le s \le m$,
$$
\lim_{x \to -1^+} g^{(s)}(x) = \lim_{x \to -1^+} f^{(s)}(x) - H^{(s)}(-1) = 0,
\qquad
\lim_{x \to 1^-} g^{(s)}(x) = \lim_{x \to 1^-} f^{(s)}(x) - H^{(s)}(1) = 0.
$$
Hence $g \in C_{\text{per}}^m(-1,1)$. Let $G_W$ be the truncated $W$-term Fourier series of $g$ using the standard Fourier basis:
$$
G_W(x) = \sum_{|k| \le W/2} \hat{g}_k e^{ik\pi x}.
$$
By Proposition~\ref{prop:Fourier}, we have the spectral approximation bound:
$$
\| g - G_W \|_{L^2(-1,1)} \le C_m W^{-m+1/2},
$$
where $C_m$ depends on $m$ and $f$, since $g$ depends on $f$ and $m$.
Moreover, by the expression for $\hat{g}_k$
in the proposition, it follows that,
for all $k$,
$|\hat{g}_k|\leq ||g||_{L^\infty(-1,1)}=:d'_m$,
which also depend only on $f$ and $m$.

We now define the final approximant as
$$
F_W(x) := H(x) + G_W(x).
$$
This can be written as
$$
F_W(x) = \sum_{k=1}^{W + 2(m+1)} \hat{f}_k \, e^{i\omega_k x},
$$
for some 
frequencies $\{\omega_k\} \subset \mathbb{R}$
and coefficients $\{\hat{f}_k\}\subset{\mathbb C}$. 
Here $\{\omega_k\}$
is the union of the $W$ frequencies in $G_W$,
i.e.\mbox{} $\{k\pi\}_{|k|\leq W/2}$,
and the $2(m+1)$ frequencies in $H$, i.e.\mbox{} 
$\{(2k+1)\pi/4\}_{-m-1\leq k\leq m}$
by \eqref{eq:Hform}. 
Note that $\{\omega_k\}$ contains
non-integer multiples of $\pi$
and that the set of
frequencies is
independent of $f$.

With this construction,
$F_{W}$ thus defines a single-layer Fourier network with $W + 2(m+1)$ neurons and achieves the desired approximation rate:
$$
\| f - F_W \|_{L^2(-1,1)} = \| (H+g) - (H+G_W) \|_{L^2(-1,1)} = \| g - G_W \|_{L^2(-1,1)} \leq C_m \, W^{-m+1/2},
$$
as claimed. The coefficients
are bounded by $D_m:=\max(d_m,d'_m)$,
independent of $W$.
\qed

\subsection{Piecewise-smooth functions with jump discontinuities}
\label{sec:nonsmooth}

We now further extend the analysis to a broader class of non-periodic functions that are piecewise smooth, allowing for discontinuities in the function and its derivatives.

To formalize our setting, we consider functions that are smooth on either side of a single point of discontinuity at \( x = 0 \), where the function and its derivatives up to order \( m \ge 0 \) may exhibit finite jumps. Specifically, we focus on functions that have $m$ continuous
and bounded derivatives on both sides of the origin, but whose one-sided derivatives at \( x = 0 \) may differ. These functions are naturally defined on the open interval \( (-1,1) \), which excludes the endpoints and allows us to concentrate on the local behavior near the singularity. 
To precisely characterize the functions under consideration, we introduce the following space:
\begin{definition}\label{def:space}
Let \( m \ge 0 \) be an integer. The space \( {C_{\mathrm{cl}}^m}((-1,1) \setminus \{0\}) \) consists of all functions \( f \colon (-1,1) \to \mathbb{R} \) such that $f \in {C_{\mathrm{cl}}^m}((-1,0)) \cap {C_{\mathrm{cl}}^m}((0,1))$.
\end{definition}

Our goal is to show that deep Fourier networks can still achieve high-order approximation in this setting by combining two ingredients: (i) a deep Fourier network that captures the discontinuity, and (ii) a shallow Fourier network that approximates the smooth residual. The result below formulates the approximation accuracy that can be achieved using such a construction.

\begin{theorem}\label{thm:main} 
Let {\( m \ge 1 \)} be an integer, and let \( f \in {C_{\mathrm{cl}}^m}((-1,1) \setminus \{ 0 \}) \), as in Definition~\ref{def:space}. 
Then for any $L, W \in \mathbb{N}$, 
there exists a deep Fourier network $F_{\rm FN}$ of depth $L+1$, whose hidden layers have width at most $2$, and whose final layer has width $W + 4(m+1)$, such that 
\begin{equation}\label{eq:main_estimate}
\| f - F_{\rm FN} \|_{L^2(-1,1)} \le C_m \left( 2^{-L/2} + W^{-m+1/2} \right),
\end{equation}
where $C_m > 0$ depends on $f$ and $m$, but not on $W$ or $L$.
Moreover, the network coefficients satisfy
\[
|c_{\ell,k}|, |c'_{\ell,k}| \le D_m,
\]
where $D_m$ depends on $f$ and $m$, but not on $W$ or $L$.
The frequencies $\{\omega_{\ell,k}\}$ and $\{\omega'_{\ell,k}\}$ are independent of $f$.
\end{theorem}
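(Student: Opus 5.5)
The plan is to split $f$ into a part that carries all the one-sided boundary behaviour and a smooth periodic remainder. The boundary part is handled by a deep network that builds an approximate sign function and then applies one Hermite interpolant on each side of $x=0$. The remainder is handled by a truncated Fourier series placed in the final layer, exactly as in the proof of Theorem~\ref{thm:nonuniformFourier}.

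\textbf{Step 1: localise the two halves with Hermite interpolants.} Denote by $f_+$ and $f_-$ the restrictions of $f$ to $(0,1)$ and $(-1,0)$. I would construct two trigonometric polynomials $P_+$ and $P_-$. The polynomial $P_+$ matches the one-sided derivatives of $f$ up to order $m$ at $0^+$ and $1^-$. The polynomial $P_-$ matches them at $-1^+$ and $0^-$. Each is a rescaled and shifted version of Proposition~\ref{prop:Hermite2} with $2(m+1)$ terms, so the two together account for the $4(m+1)$ extra neurons in the final layer. Set
\[
r := f - \chi_{(0,1)}P_+ - \chi_{(-1,0)}P_- .
\]
Then every derivative of $r$ up to order $m$ vanishes from both sides at $-1$, $0$ and $1$. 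Hence $r\in C^m_{\rm per}(-1,1)$ with no jump at $0$, and Proposition~\ref{prop:Fourier} gives a $W$-term series $G_W$ with $\|r-G_W\|_{L^2}\le C_mW^{-m+1/2}$. This series goes into the branch $g_{L+1}(x)$, with coefficients bounded by $\|r\|_\infty$.

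\textbf{Step 2: realise $\chi_{(0,1)}P_+ + \chi_{(-1,0)}P_-$ as a function of the last hidden output.} Let $S_{L-1}$ be the width-one sign network of Theorem~\ref{thm:step} after $L-1$ layers. In layer $L$ I would add two terms:
\begin{itemize}
\item $h_L(S_{L-1})=\sin(\pi S_{L-1}/2)$, which equals $\pm1$ exactly at $S=\pm1$ and is Lipschitz;
\item $g_L(x)=\theta(x)$, a fixed small-amplitude trigonometric function such as $\theta(x)=a\sin(\pi x/4)$, which is a smooth strictly increasing bijection from $[-1,1]$ onto a short interval $[-\theta_0,\theta_0]$.
\end{itemize}
This gives
\[
z(x) := f_L(x) = S_{L-1}(x) + \sin\!\big(\pi S_{L-1}(x)/2\big) + \theta(x) \approx 2\,\mathrm{sgn}(x) + \theta(x).
\]
The hidden width is at most $2$. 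On the right side $z$ lies in a window near $2$, and on the left in a window near $-2$.

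I would then choose $h_{L+1}(z)=\Re\sum c'_k e^{i\omega'_k z}$. Its frequencies are taken in pairs $\omega$ and $\omega+\pi/4$, so that the factors $e^{2i\omega' s}$ for $s=\pm1$ separate the two sides. This should let one trigonometric sum reproduce $P_+\circ\theta^{-1}$ on the right window and $P_-\circ\theta^{-1}$ on the left window, with the Hermite conditions imposed in the variable $\theta$ rather than $x$. Because $\theta$ is a smooth diffeomorphism, composing with $\theta^{-1}$ preserves the matched endpoint derivatives. The frequencies are fixed in advance and do not depend on $f$, while the coefficients depend only on $f$ and $m$, which gives the bound $D_m$.

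\textbf{Step 3: error bound.} The exact target $h_{L+1}(2\,\mathrm{sgn}+\theta)$ equals $\chi P_+ + \chi P_-$. Since $h_{L+1}$ has bounded coefficients, it is Lipschitz with a constant depending only on $f$ and $m$. The map $S\mapsto S+\sin(\pi S/2)$ is also Lipschitz. Therefore
\[
\|h_{L+1}(z)-h_{L+1}(2\,\mathrm{sgn}+\theta)\|_{L^2}\lesssim \|S_{L-1}-\mathrm{sgn}\|_{L^2}\le C_2\,2^{-(L-1)/2},
\]
using Theorem~\ref{thm:step} with $p=2$. Adding the bound $W^{-m+1/2}$ from Step 1 yields \eqref{eq:main_estimate}. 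The residual connection $f_L$ also appears in the output, but its contribution is a known function of $z$ and can be subtracted by absorbing it into $h_{L+1}$ and $g_{L+1}$.

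\textbf{Main obstacle.} The hardest step is Step 2. It requires showing that a single trigonometric polynomial in $z$, with $O(m)$ terms and $f$-independent frequencies, can carry out simultaneous Hermite interpolation on two disjoint windows. That amounts to four interpolation nodes, namely the window endpoints, in place of the two in Proposition~\ref{prop:Hermite2}. My first attempt would be to apply Proposition~\ref{prop:Hermite} directly with $n=4$ nodes after an affine rescaling of $z$ onto $[0,\pi)$. If that works, it explains the width count $4(m+1)=n(m+1)$ with $n=4$.
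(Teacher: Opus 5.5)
Your decomposition is a workable route that differs genuinely from the paper's, but as written Steps 1--3 rest on an identity that cannot hold. A trigonometric polynomial $h_{L+1}$ in $z$ with $4(m+1)$ fixed frequencies cannot satisfy $h_{L+1}(\pm2+\theta(x))=P_\pm(x)$ on whole half-intervals for prescribed trigonometric polynomials $P_\pm$ in $x$. So the claim that ``the exact target equals $\chi_{(0,1)}P_++\chi_{(-1,0)}P_-$'' is false, and your error split would leave an $\mathcal{O}(1)$ mismatch between this function and what the network actually outputs. Similarly, the skip term in the output cannot be ``subtracted'' by trigonometric sums.

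The repair is to drop $P_\pm$ altogether. With $z_{\rm ex}:=2\,\sgn+\theta$, set $q:=z_{\rm ex}+h_{L+1}(z_{\rm ex})$ and $r:=f-q$. Require only that $q^{(s)}$ match $f^{(s)}$ at $-1^+,0^-,0^+,1^-$ for $s\le m$.
\begin{itemize}
\item Exactly as in Lemma~\ref{lem:axuiliary}, the chain rule turns these conditions into prescribed values of $h_{L+1}^{(s)}$ at the four distinct nodes $\pm2$ and $\pm(2+\theta(1))$.
\item The systems are lower triangular with diagonal entries $\theta'(x_j)^s\neq0$ for $x_j\in\{-1,0,1\}$, so they are invertible.
\item The data are shifted by $z_{\rm ex}^{(s)}$; this is how the skip connection is absorbed.
\item Proposition~\ref{prop:Hermite} with $n=4$, after an affine map of the nodes into $[0,\pi)$, then gives such an $h_{L+1}$ with $4(m+1)$ terms, $f$-independent frequencies and bounded coefficients.
\end{itemize}
So your ``main obstacle'' dissolves, and the paired-frequency device is unnecessary. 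With this $r$ you get $r\in C^m_{\rm per}(-1,1)$, and the rest of your argument goes through. The map to $\pm2$ is also unnecessary. $S_L+\theta$ already sends the two halves into the disjoint windows $[1,1+\theta(1)]$ and $[-1-\theta(1),-1]$, and using $S_L$ also avoids the undefined $S_0$ when $L=1$.

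The paper distributes the work differently. Its Lemma~\ref{lem:axuiliary} uses $z=\sgn+\sin x$ and interpolates only at the two inner nodes $y=\pm1$. As a result $r=f-q$ is $C^m$ across $0$ but not periodic. The paper then applies Theorem~\ref{thm:nonuniformFourier} to $r$, which places a second two-node Hermite interpolant, in the variable $x$ with frequencies $(2k+1)\pi/4$, in the upper branch.

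The paper's final width $W+4(m+1)$ therefore splits as $2(m+1)$ terms in the lower branch plus $W+2(m+1)$ in the upper. Yours puts all $4(m+1)$ correction terms in the lower branch and leaves a plain truncated Fourier series with frequencies $k\pi$ in the upper one.

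The paper's split is modular: it reuses Theorem~\ref{thm:nonuniformFourier} as a black box and needs Delvos' result only with two nodes. Yours bypasses Theorem~\ref{thm:nonuniformFourier} and makes the residual genuinely periodic. It also shows that one interpolant acting on the hidden variable can absorb both the interior jump and the endpoint mismatch, because $z_{\rm ex}$ sends the two smooth pieces to disjoint windows.
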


This result shows that deep Fourier networks can resolve discontinuities while maintaining high-order approximation in the smooth regions. The error bound \eqref{eq:main_estimate} reflects a clear separation of roles between depth and width: the depth $L$ controls the resolution of the discontinuity, yielding exponential decay, while the width $W$ governs the approximation of the smooth components, yielding algebraic decay determined by the available smoothness. The construction proceeds by decomposing the target function into a singular component, which captures the jump, and a smooth residual, which is approximated using a shallow Fourier network. This decomposition allows the network to localize oscillatory effects and achieve accurate approximation across the domain despite the presence of discontinuities.

We next record two remarks concerning stability and the role of frequencies in the construction.

\begin{remark}
The uniform bound on the coefficients ensures that the approximation remains stable as accuracy is improved, in the sense that no coefficients diverge as $W$ or $L$ increase. In particular, the present construction does not rely on representations that approximate discontinuities by introducing increasingly steep transitions with unbounded weights. A typical example is the representation of a step function using scaled differences of ReLU functions of the form
\[
\frac{1}{\varepsilon}\max(0,x)-\frac{1}{\varepsilon}\max(0,x-\varepsilon),
\]
where the transition layer has width $\varepsilon$, and the coefficients grow like $\varepsilon^{-1}$ as $\varepsilon \to 0$.
\end{remark}

\begin{remark}
The frequencies used in the construction are independent of the target function $f$ in the single-jump setting considered here. For functions with multiple discontinuities, the frequencies may depend on the number and configuration of the jumps. It is plausible that alternative constructions could be developed in which the frequencies remain independent of the locations of the discontinuities. A detailed investigation of such constructions is left for future work.
\end{remark}

We now state two corollaries that further clarify the approximation properties of these networks.

\begin{corollary}[Spectral convergence for piecewise-$C^\infty$ functions]
\label{cor:spectral}
Suppose $f$ is piecewise-$C^\infty$ with a jump discontinuity at $x=0$. Then for any $p>0$, there exists a constant $C_p>0$ such that, for suitable deep Fourier networks $F_{N,L}$ of depth $L+1$, whose hidden layers have width at most $2$ and whose final layer has width at most $2N$,
\[
\|f-F_{N,L}\|_{L^2(-1,1)} \le C_p\bigl(2^{-L/2}+N^{-p}\bigr),
\qquad \text{when $N\geq 2p+5$.}
\]
In particular, the approximation exhibits spectral convergence in $N$ and exponential convergence in $L$.
\end{corollary}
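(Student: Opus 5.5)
The plan is to derive the corollary directly from Theorem~\ref{thm:main} by choosing the smoothness index $m$ as a function of $p$ and then converting the width parameter $W$ into $N$. Since $f$ is piecewise-$C^\infty$ with a jump at $0$, it lies in $C_{\mathrm{cl}}^m((-1,1)\setminus\{0\})$ for every integer $m\ge 1$, provided we read ``piecewise-$C^\infty$'' as including the existence of finite one-sided limits of all derivatives at $-1,0,1$, as in Definition~\ref{def:space}. Given $p>0$, I would take $m := \lceil p + 1/2\rceil$, so that $m \ge 1$ and $-m+1/2 \le -p$.

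Next I would fix $N$ and set $W := 2N - 4(m+1)$. This makes the final-layer width $W + 4(m+1) = 2N$, as required. The condition $N \ge 2p+5$ should make $W$ positive and comparable to $N$. Indeed, $m \le p + 3/2$ gives $4(m+1) \le 4p + 10 \le 2N$, so $W \ge 0$. To get $W \ge 1$ with a uniform proportionality, I would refine this: since $m< p+3/2$ strictly, we get $4(m+1) < 4p+10 \le 2N$. Because all quantities except $p$ are integers, $W \ge 1$. For a bound $W \ge cN$, I would split into two regimes. If $N \ge 4(m+1)$, then $W \ge N$ directly. Otherwise $N$ lies in the finite range $[2p+5, 4(m+1))$, where $W\ge 1 \ge N/(4(m+1))$. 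In both cases $W \ge N/(4(m+1))$.

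Applying Theorem~\ref{thm:main} with this $m$, $L$, and $W$ gives a network of depth $L+1$, hidden width at most $2$, final width $2N$, with error at most
\[
C_m\bigl(2^{-L/2} + W^{-m+1/2}\bigr) \le C_m\bigl(2^{-L/2} + (4(m+1))^{m-1/2} N^{-m+1/2}\bigr).
\]
Since $N\ge 1$ and $m-1/2\ge p$, we have $N^{-m+1/2}\le N^{-p}$. Setting $C_p := C_m \max(1,(4(m+1))^{m-1/2})$, which depends only on $p$ (through $m$) and on $f$, yields the claim. Spectral convergence in $N$ and exponential convergence in $L$ then follow immediately, because $p$ is arbitrary.

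The main obstacle is the bookkeeping that ensures $W\ge 1$ and $W\gtrsim N$ under exactly the threshold $N\ge 2p+5$, which requires handling the ceiling in $m$ carefully. If the strict inequality argument above is too tight at boundary cases, I would instead choose $m=\lfloor p\rfloor+1$. That choice gives $-m+1/2 < -p+1/2$, so it would need the slightly adjusted $m=\lfloor p+1/2\rfloor+1$, and I would recheck $4(m+1)\le 4p+8<2N$. The remaining steps are direct substitution.
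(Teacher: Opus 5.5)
Your argument is correct and follows the paper's proof: both choose $m=\lceil p+1/2\rceil$ and apply Theorem~\ref{thm:main} directly. The only difference is bookkeeping. The paper takes $W=N-2(m+1)$, so the final width is $N+2(m+1)<2N$, and bounds $(1-2(m+1)/N)^{-p}\le(2m+3)^p$. You instead take $W=2N-4(m+1)$ and use $W\ge N/(4(m+1))$. Your fallback paragraph is unnecessary, since $\lceil p+1/2\rceil<p+3/2$ already gives the strict inequality you need. It also contains a slip: with $m=\lfloor p+1/2\rfloor+1$ one only gets $4(m+1)\le 4p+10$, not $4p+8$.
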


\begin{proof}[Proof of Corollary~\ref{cor:spectral}]
This follows directly from Theorem~\ref{thm:main}. For any prescribed $p>0$, let $m$ be the integer 
{$m=\lceil p+1/2 \rceil\geq 1$}.
Since $f$ is piecewise-$C^\infty$, the theorem applies for this $m$. Let $N = W + 2(m+1)$. Then Theorem~\ref{thm:main} yields a deep Fourier network $F_{N,L}$ of depth $L+1$, whose hidden layers have width at most $2$, and whose final layer has width $W+4(m+1)$, such that
\[
\|f-F_{N,L}\|_{L^2(-1,1)}
\le C'_m\bigl(2^{-L/2}+W^{-m+1/2}\bigr).
\]
Moreover, $p+1/2\leq m <p+3/2$ and
$N\geq 2(p+3/2+1)>  2(m+1)$, so
\begin{align*}
W^{-m+1/2}
&\leq W^{-p}=(N-2(m+1))^{-p}
=\left(1-\frac{2(m+1)}N\right)^{-p}N^{-p}
\leq 
\left(1-\frac{2(m+1)}{2(m+1)+1}\right)^{-p}
N^{-p}
\\
&=
(2m+3)^{p}N^{-p}<(2p+6)^pN^{-p}=:C_p'N^{-p},
\end{align*}
and the final width satisfies
$W+4(m+1)=N+2(m+1)< 2N$.
Then,
\[
\|f-F_{N,L}\|_{L^2(-1,1)}
\le C_p\bigl(2^{-L/2}+N^{-p}\bigr),
\]
with $C_p=C_m'C_p'$. 
\end{proof}

\begin{corollary}[Approximation complexity for piecewise-$C^m$ functions]\label{cor:piecewise_spectral} 
Let $\varepsilon > 0$ and suppose $f$ satisfies the assumptions of Theorem~\ref{thm:main} for some 
fixed $m \ge 1$. Then there exists a Fourier network $F_{\rm FN}$ with depth $L = \mathcal{O}(\log \varepsilon^{-1})$ and width $W = \mathcal{O}(\varepsilon^{-1/(m-1/2)})$ such that
\[
\| f - F_{\rm FN} \|_{L^2(-1,1)} \le \varepsilon.
\]
Moreover,
the network coefficients are bounded
independently of $\varepsilon$.
\end{corollary}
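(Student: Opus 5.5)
This corollary follows from Theorem~\ref{thm:main} by choosing $L$ and $W$ so that each of the two error terms in \eqref{eq:main_estimate} is at most $\varepsilon/2$. With $m\ge 1$ fixed, Theorem~\ref{thm:main} gives, for every $L,W\in\mathbb{N}$, a network of depth $L+1$ with error at most $C_m(2^{-L/2}+W^{-m+1/2})$. Its coefficients are bounded by $D_m$, which depends only on $f$ and $m$. Since $m$ is fixed and independent of $\varepsilon$, the coefficient bound is automatic. So the work is to pick $L$ and $W$ and to check the orders of magnitude.

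For the depth, require $C_m 2^{-L/2}\le \varepsilon/2$. This holds if
$L \ge 2\log_2(2C_m/\varepsilon)$, so we set
\[
L:=\bigl\lceil 2\log_2(2C_m\varepsilon^{-1})\bigr\rceil ,
\]
which is $\mathcal{O}(\log\varepsilon^{-1})$ as $\varepsilon\to 0$. The additive constant $2\log_2(2C_m)$ and the ceiling are absorbed into the $\mathcal{O}$. For $\varepsilon$ not small we can take $L\ge 1$ as needed, and the total depth is $L+1$, which is still $\mathcal{O}(\log\varepsilon^{-1})$.

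For the width, require $C_m W^{-m+1/2}\le\varepsilon/2$. Because $m\ge1$, the exponent $m-1/2$ is positive, and the condition is equivalent to $W\ge (2C_m/\varepsilon)^{1/(m-1/2)}$. We therefore set
\[
W:=\bigl\lceil (2C_m)^{1/(m-1/2)}\,\varepsilon^{-1/(m-1/2)}\bigr\rceil .
\]
The final layer of the network from Theorem~\ref{thm:main} then has width $W+4(m+1)$, and every hidden layer has width at most $2$. Both quantities are $\mathcal{O}(\varepsilon^{-1/(m-1/2)})$, since the offset $4(m+1)$ is a constant and, for $\varepsilon$ small, is dominated by the growing term; for $\varepsilon$ bounded below the bound holds trivially by enlarging the constant.

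Adding the two bounds gives $\|f-F_{\rm FN}\|_{L^2(-1,1)}\le \varepsilon$.

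The only step needing care is the bookkeeping in the width: making clear that the stated $W$ refers to the overall network width, namely the final-layer width $W+4(m+1)$. This requires that the constant offset and the ceilings do not change the asymptotic order, which is why $m$ must be held fixed. Everything else is a direct substitution into \eqref{eq:main_estimate}.
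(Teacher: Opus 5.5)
Your proposal is correct and follows the paper's proof essentially verbatim. You split the bound \eqref{eq:main_estimate} into two halves, solve $2^{-L/2}\le \varepsilon/(2C_m)$ and $W^{-m+1/2}\le\varepsilon/(2C_m)$ for $L$ and $W$, and inherit the coefficient bound $D_m$ from Theorem~\ref{thm:main}, just as the paper does. Your extra bookkeeping (the ceilings, the total depth being $L+1$, and the final-layer width $W+4(m+1)$ having the same order) is a harmless refinement that the paper leaves implicit.
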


\begin{proof}[Proof of Corollary~\ref{cor:piecewise_spectral}]
Given any $\varepsilon > 0$, we aim to choose $L$ and $W$ such that the right-hand side of the estimate in \eqref{eq:main_estimate} is bounded by $\varepsilon$, i.e.,
\[
C_m \left( 2^{-L/2} + W^{-m+1/2} \right) \le \varepsilon.
\]
It suffices to require
\[
2^{-L/2} \le \frac{\varepsilon}{2C_m} 
\quad \text{and} \quad 
W^{-m+1/2} \le \frac{\varepsilon}{2C_m}.
\]
Solving these inequalities yields
\[
L \ge 2 \log_2\!\left( \frac{2C_m}{\varepsilon} \right) = \mathcal{O}(\log \varepsilon^{-1}),
\quad
W \ge \left( \frac{2C_m}{\varepsilon} \right)^{1/(m-1/2)} = \mathcal{O}(\varepsilon^{-1/(m-1/2)}).
\]
Finally, the network coefficients
$c_{\ell,k}$ and
$c'_{\ell,k}$ are bounded by $D_m$
which does not depend on $\varepsilon$.
This completes the proof.
\end{proof}

These two corollaries characterize the approximation behavior of deep Fourier networks across different regularity regimes. For piecewise-$C^\infty$ functions, the approximation achieves spectral convergence in $N$ and exponential convergence in depth. In contrast, for functions with finite smoothness $m$, the approximation exhibits algebraic decay in the width parameter and logarithmic dependence of the depth on the target accuracy $\varepsilon$. In particular, Corollary~\ref{cor:piecewise_spectral} shows that high-order approximation is retained even in the presence of a discontinuity, with logarithmic depth and algebraic dependence of the width on $\varepsilon$. We note that for the case $m = 0$, where the function $f$ may be nowhere differentiable, the approximation error in our construction remains of order $\mathcal{O}(1)$ regardless of the network depth. Therefore, the corollary, which requires $m \ge 1$, excludes this case.

We now turn to the proof of Theorem~\ref{thm:main}. The argument is based on constructing a function in ${C_{\mathrm{cl}}^m}((-1,1) \setminus \{ 0 \})$ that realizes prescribed jumps in its derivatives up to order $m$ at the point of discontinuity $x=0$. This construction is formalized in the following lemma.

\begin{lemma}\label{lem:axuiliary}
Let $z \in {C_{\mathrm{cl}}^m}((-1,1) \setminus \{0\})$, and suppose that
$$
z(0^-) = -1, \qquad z(0^+) = 1, \qquad z'(0^-) \ne 0, \qquad z'(0^+) \ne 0.
$$
Then, for any prescribed values $\{ \alpha_s, \beta_s \}_{s=0}^m$, there exists a trigonometric polynomial $H(y)$ of degree at most $2(m+1)$ such that the composite function
$$
q(x) := z(x) + H(z(x))
$$
satisfies
\begin{equation}\label{eq:jump_Q}
q^{(s)}(0^-) = \alpha_s, \qquad q^{(s)}(0^+) = \beta_s, \qquad 0 \le s \le m,
\end{equation}
\end{lemma}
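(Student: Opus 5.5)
The plan is to reduce the problem to Hermite interpolation of $H$ at the two points $y=-1$ and $y=1$, and then invoke Proposition~\ref{prop:Hermite2}. By the chain rule (Faà di Bruno), for $x\to 0^\pm$ the one-sided derivatives of $q=z+H\circ z$ are
\[
q^{(s)}(0^\pm) = z^{(s)}(0^\pm) + \sum_{r=1}^{s} H^{(r)}\bigl(z(0^\pm)\bigr)\, B_{s,r}\bigl(z'(0^\pm),\dots,z^{(s-r+1)}(0^\pm)\bigr),
\]
for $s\ge 1$, where $B_{s,r}$ are the partial Bell polynomials. For $s=0$ we simply have $q(0^\pm)=z(0^\pm)+H(z(0^\pm))$. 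Here $z(0^-)=-1$ and $z(0^+)=1$, so only the values $H^{(r)}(-1)$ enter at $0^-$ and only $H^{(r)}(1)$ enter at $0^+$. These one-sided limits exist because $z\in C^m_{\mathrm{cl}}$ on each side and $H$ is smooth, so the composition's derivatives extend continuously to $0^\pm$.

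The key observation is that the system is triangular. In the formula for $q^{(s)}(0^-)$, the highest derivative of $H$ that appears is $H^{(s)}(-1)$, with coefficient $B_{s,s}=(z'(0^-))^s\neq 0$. I will therefore determine the targets $\tilde\alpha_s := H^{(s)}(-1)$ recursively:
\[
\tilde\alpha_0=\alpha_0+1,\qquad
\tilde\alpha_s = (z'(0^-))^{-s}\Bigl(\alpha_s - z^{(s)}(0^-) - \sum_{r=1}^{s-1}\tilde\alpha_r B_{s,r}(\dots)\Bigr),\quad 1\le s\le m.
\]
The $\tilde\beta_s$ are defined analogously from $\beta_s$, using the data of $z$ at $0^+$ and $\tilde\beta_0=\beta_0-1$. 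This step needs the hypothesis $z'(0^\pm)\neq 0$, and it is the only place it is used.

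Next, Proposition~\ref{prop:Hermite2} gives a trigonometric polynomial $H(y)=\sum_{k=-(m+1)}^{m} c_{2k+1}e^{i(2k+1)\pi y/4}$ with $2(m+1)$ terms satisfying $H^{(s)}(-1)=\tilde\alpha_s$ and $H^{(s)}(1)=\tilde\beta_s$ for $0\le s\le m$. Substituting back, induction on $s$ shows that \eqref{eq:jump_Q} holds exactly.

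The main point to handle is a subtlety rather than a hard step. If real-valuedness of $q$ is wanted, I would note that the targets $\tilde\alpha_s,\tilde\beta_s$ are real when $z$ and $\alpha_s,\beta_s$ are real. In that case $\Re H$ satisfies the same real interpolation conditions, so $H$ can be replaced by $\Re H$, which uses the same frequencies. Beyond this, the only care needed is stating the Faà di Bruno structure precisely enough to make the triangularity evident. The coefficient bound is not part of this lemma, but it follows because the $\tilde\alpha_s,\tilde\beta_s$ depend only on $z$ and the prescribed data.
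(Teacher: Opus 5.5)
Your proposal is correct and follows essentially the same route as the paper, which also uses the lower-triangular structure of the chain-rule expansion to solve for $H^{(s)}(\pm 1)$ and then invokes Proposition~\ref{prop:Hermite2}. The paper writes that structure with recursively defined coefficients $w_j^s$ instead of Bell polynomials, but the diagonal is the same, $z'(0^\pm)^s \neq 0$. Your added remark that $H$ can be replaced by $\Re H$, which uses the same frequency set, is a harmless refinement the paper does not make explicitly.
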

\begin{proof}
Let $w(x) := H(z(x))$, where $H$ is the
trigonometric polynomial to be determined. For $x \ne 0$, the chain rule and induction give:
$$
\frac{d^s w(x)}{dx^s} = \sum_{j=0}^s w_j^s(x)\, H^{(j)}(z(x)), \qquad 0 \le s \le m,
$$
where, away from $x=0$,  the coefficients $w_j^s(x)$ are defined recursively by
$$
w_j^{s+1}(x) =
\begin{cases}
(w_0^s)'(x), & j = 0, \\
(w_j^s)'(x) + z'(x)\, w_{j-1}^s(x), & 1 \le j \le s, \\
z'(x)\, w_s^s(x), & j = s+1,
\end{cases}
\qquad \text{with } w_0^0(x) = 1.
$$
In particular, we have $w_s^s(x) = z'(x)^s$, for all $s \ge 0$. 

We now express the derivatives of $w$ in matrix form:
$$
\begin{pmatrix}
w(x) \\
w^{(1)}(x) \\
\vdots \\
w^{(m)}(x)
\end{pmatrix}
=
A(x)
\begin{pmatrix}
H(z(x)) \\
H^{(1)}(z(x)) \\
\vdots \\
H^{(m)}(z(x))
\end{pmatrix}, \qquad
A(x) = 
\begin{pmatrix}
w_0^0(x) &  & & \\
w_0^1(x) & w_1^1(x) & & \\
\vdots & \vdots & \ddots & \\
w_0^m(x) & w_1^m(x) & \cdots & w_m^m(x)
\end{pmatrix},
$$
where $A(x)$ is a lower triangular matrix with diagonal entries $w_s^s(x) = z'(x)^s$. 
Evaluating the expression above at $x = 0^-$ and $x = 0^+$, i.e. writing the conditions for $q(x)-z(x) = w(x)$, and noting that $z(0^-) = -1$, $z(0^+) = 1$, we obtain the linear systems:
$$
\begin{pmatrix}
\alpha_0 - z(0^-) \\
\alpha_1 - z^{(1)}(0^-) \\
\vdots \\
\alpha_m - z^{(m)}(0^-)
\end{pmatrix}
= A(0^-)
\begin{pmatrix}
H(-1) \\
H^{(1)}(-1) \\
\vdots \\
H^{(m)}(-1)
\end{pmatrix}, \qquad
\begin{pmatrix}
\beta_0 - z(0^+) \\
\beta_1 - z^{(1)}(0^+) \\
\vdots \\
\beta_m - z^{(m)}(0^+)
\end{pmatrix}
= A(0^+)
\begin{pmatrix}
H(1) \\
H^{(1)}(1) \\
\vdots \\
H^{(m)}(1)
\end{pmatrix}.
$$
Since the diagonal entries of $A(0^\pm)$ are powers of $z'(0^\pm) \neq 0$, the matrices are invertible, and the values of $H$ and its derivatives at $y = \pm 1$ can be uniquely determined.

By Proposition~\ref{prop:Hermite2}, there exists a trigonometric polynomial $H$ of degree $2(m+1)$ on $[-1,1]$ such that $H^{(s)}(-1)$ and $H^{(s)}(1)$ match these prescribed values for all $0 \le s \le m$. Then the function $q(x) := z(x) + H(z(x))$ satisfies \eqref{eq:jump_Q}, as required.
\end{proof}

With this result, we are now ready to prove Theorem~\ref{thm:main}.

\medskip
\noindent
{\bf Proof of Theorem \ref{thm:main}.} 
We construct the approximation and establish the error estimate in six steps, as follows.

\smallskip
\noindent
\textbf{Step 1.} {\it Construction of a composite function with prescribed jump discontinuities.} 
Let \( z(x) := \sgn(x) + \sin(x) \). Then $z$ belongs to \( {C_{\mathrm{cl}}^m}((-1,1) \setminus \{0\}) \) and satisfies
\[
z(0^-) = -1, \quad z(0^+) = 1, \quad z'(0^-) = z'(0^+) = 1 \neq 0.
\]
By Lemma~\ref{lem:axuiliary}, there exists a trigonometric polynomial \( H \) with \( 2(m+1) \) terms such that the composite function \( q(x) := z(x) + H(z(x)) \) satisfies the jump conditions \eqref{eq:jump_Q} for any prescribed values \( \alpha_s, \beta_s \in \mathbb{R} \) with \( 0 \le s \le m \). Setting \( \alpha_s = f^{(s)}(0^-) \) and \( \beta_s = f^{(s)}(0^+) \), we ensure that \( q \) replicates the jumps of \( f \) and its derivatives up to order \( m \) at the origin.
Since $H$
only depends on $f$ and $m$, its coefficients can be bounded as
$|c_{2k+1}|\leq d_m$, where $d_m$
depends on $f$ and $m$.

\smallskip
\noindent
\textbf{Step 2.} {\it Construction of a smooth residual function.}
Define the residual function
\[
r(x) := f(x) - q(x).
\]
Since \( f, q \in {C_{\mathrm{cl}}^m}((-1,1) \setminus \{0\}) \) and \( q \) is constructed to match the jumps of \( f \) at \( x = 0 \), the difference \( r \) is globally smooth: \( r \in C_{\rm b}^m((-1,1)) \).

\smallskip
\noindent
\textbf{Step 3.} {\it Approximation of the residual by a shallow Fourier network.} By Theorem~\ref{thm:nonuniformFourier}, there exists a shallow (single-layer) Fourier network \( R_W \) of the form
\[
R_W(x) = \sum_{k=1}^{N} \hat{r}_k e^{i\omega_k x}, \qquad N = W + 2(m+1),
\]
such that
\begin{equation}\label{eq:bound_R}
\| r - R_W \|_{L^2(-1,1)} \le C_{R,m} W^{-m+1/2},
\end{equation}
where \( C_{R,m} > 0 \) depends on $f$ and $m$ (via $r$ and $q$), but not on $W$. 

Moreover, the coefficients of \( R_W \) are bounded by a constant \( D_{R,m} \), which depends on $f$ and $m$, but not on $W$. The frequencies $\{\omega_k\}$ are independent of $f$.

\smallskip
\noindent
\textbf{Step 4.} {\it Approximation of the discontinuous composite function by a deep network.} 
Let \( S_L \) be the depth-\( L \), width-one Fourier network from Theorem~\ref{thm:step} satisfying
\begin{equation}\label{eq:bound_S_refined}
\| \sgn - S_L \|_{L^2(-1,1)} \le C_2\, 2^{-L/2}
\end{equation}
for some universal constant \( C_2 > 0 \). Define \( Z_L(x) := S_L(x) + \sin(x) \). Then \( Z_L \) approximates \( z(x) = \sgn(x) + \sin(x) \) with the same exponential rate. Consequently, the function $Z_L + H(Z_L)$ defines a depth-$(L+1)$ network approximating $q = z + H(z)$. Moreover, the definition in \eqref{eq:f0def} and \eqref{eq:fndef}
shows that all coefficients of
$S_L$, and therefore $Z_L$, are bounded by one.

\smallskip
\noindent
\textbf{Step 5.} {\it Final network construction.} 
We define the final approximation by
\[
F_{\rm FN}(x) := Z_L(x) + H(Z_L(x)) + R_W(x).
\]
This corresponds to a Fourier network with \( L + 1 \) layers, as illustrated in Figure~\ref{fig:thm3}. The first \( L \) layers compute the approximation \( S_L(x) \) to the sign function. In parallel, a single-neuron upper branch is added in layer \( L \) to compute \( \sin(x) \). These are combined to form the output \( Z_L(x) = S_L(x) + \sin(x) \). The final, \((L+1)\)-th, layer consists of two branches: an upper branch that takes \( x \) as input and computes \( R_W(x) \) using \( W + 2(m+1) \) neurons, and a lower branch that takes \( Z_L(x) \) as input and computes \( H(Z_L(x)) \) using \( 2(m+1) \) neurons. The outputs of these branches are added to $Z_L(x)$ to complete the approximation \( F_{\rm FN} \). 
The total number of neurons used is: $L \text{ (for } S_L) + 1 \text{ (for } \sin(x)) + (W + 2(m+1)) + 2(m+1) = L + W + 1 + 4(m+1)$.
\begin{figure}[!ht]
    \centering
    \includegraphics[width=0.99\linewidth]{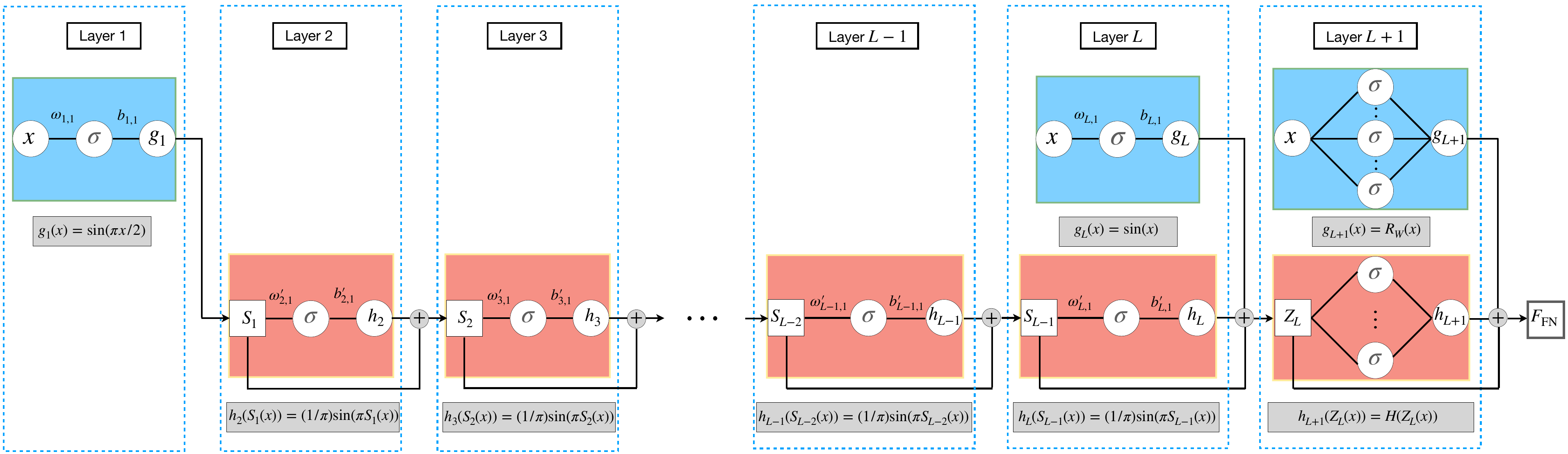}
    \caption{Schematic of the Fourier network \( F_{\rm FN} \) used to approximate \( f \). The first \( L \) layers generate \( S_L(x) \); a parallel branch at layer $L$ computes \( \sin(x) \), and their sum yields \( Z_L(x) \). The final layer consists of two branches: the upper branch computes \( R_W(x) \) from \( x \) using \( W + 2(m+1) \) neurons, while the lower branch computes \( H(Z_L(x)) \) from \( Z_L(x) \) using \( 2(m+1) \) neurons.  These two outputs are added to \( Z_L(x) \) to produce the final output \( F_{\rm FN}(x) \).}
    \label{fig:thm3}
\end{figure}

\smallskip
\noindent
\textbf{Step 6.} {\it Error estimate.} 
Using the triangle inequality, we estimate the total approximation error as
\begin{align*}
\| f - F_{\rm FN} \|_{L^2(-1,1)} 
&= \| q + r - (Z_L + H(Z_L) + R_W) \|_{L^2(-1,1)} \\
&= \| (z + H(z)) - (Z_L + H(Z_L)) + (r - R_W) \|_{L^2(-1,1)} \\
&\le \| z - Z_L \|_{L^2(-1,1)} + \| H(z) - H(Z_L) \|_{L^2(-1,1)} + \| r - R_W \|_{L^2(-1,1)}.
\end{align*}
Since $H$ is a trigonometric polynomial, it is Lipschitz on bounded intervals. Therefore,
\[
\| H(z) - H(Z_L) \|_{L^2(-1,1)} \le C_H \| z - Z_L \|_{L^2(-1,1)},
\]
for some constant \( C_H > 0 \)
that depends on $f$ and $m$, since $H$ depends on $f$ and $m$. Using this,
the fact that $z-Z_L=\sgn-S_L$ and the bounds \eqref{eq:bound_R},\eqref{eq:bound_S_refined}, we obtain
\[
\| f - F_{\rm FN} \|_{L^2(-1,1)} \le 
(1+C_H)
\| \sgn-S_L \|_{L^2(-1,1)} +  \| r - R_W \|_{L^2(-1,1)}\leq
C_m \left( 2^{-L/2} + W^{-m+1/2} \right),
\]
where \( C_m=(1+C_H)(C_2+C_{R,m}) > 0 \) depends on $f$ and $m$ but not
on $W$ or $L$.

The coefficients
of $F_{\rm FN}$
are all bounded by $\max(d_m,d'_m,1)$,
independent of $W$ and $L$.
Its frequencies are the union
of the frequencies in $S_L$,
$\sin(x)$, $H$ and $R_W$,
which are all independent of $f$.
This is obvious for $\sin(x)$.
For $R_W$ it is given in
Theorem~\ref{thm:nonuniformFourier}.
By \eqref{eq:f0def}, \eqref{eq:fndef}
the frequencies in $S_L$
are simply $\pi$ in each level except
in the first, where it is $\pi/2$,
for all $f \in {C_{\mathrm{cl}}^m}((-1,1) \setminus \{ 0 \}) $.
Finally, the trigonometric polynomial $H$
is of the form \eqref{eq:Hform}
and thus consists of the frequencies
$\{(2k+1)\pi/4$ for $k=-m-1,\ldots, m$,
also independent of $f$.
This completes the proof. 
\qed

%%%%%%%%%%%%%%%%%%%%%%%%%%%%%%%%%%%%%
\section{Numerical Experiments}
\label{sec:numerics}

We now present numerical experiments that verify the theoretical results established in Sections~\ref{sec:step} and~\ref{sec:generalization}. We also demonstrate that similar spectral convergence can be achieved in practice by replacing the constructive approximation with a training-based approach using the optimal sampling algorithm developed in~\cite{Davis_etal:2025}.

\subsection{Constructive approximation of the sign function}

We revisit the motivating example from Section~\ref{sec:step} and compare the performance of the Fourier ResNet with that of a truncated Fourier series in approximating the sign function. Convergence plots demonstrate the exponential decay of the $L^1$ error achieved by the ResNet construction.

Specifically, we consider the Fourier ResNet defined in equations~\eqref{eq:f0def}--\eqref{eq:fndef} with width $W = 1$ and depth $L \ge 2$, and compare it against the $L$-term Fourier sine series
\[
\tilde{f}_L(x) := \sum_{\ell = 1}^L \frac{4}{\pi (2 \ell - 1)} \sin((2 \ell - 1) x), \qquad x \in [-1,1],
\]
which approximates the sign function using the same number of terms, and thus comparable computational cost.

Figure~\ref{fig:sign_approx} shows both approximations for $L = 5$ and $L = 20$. As expected, the truncated Fourier series suffers from Gibbs oscillations near the discontinuity, while the ResNet approximation remains monotonic and entirely eliminates the undershoots and overshoots. 
\begin{figure}[htbp]
    \centering
\includegraphics[width=0.48\textwidth]{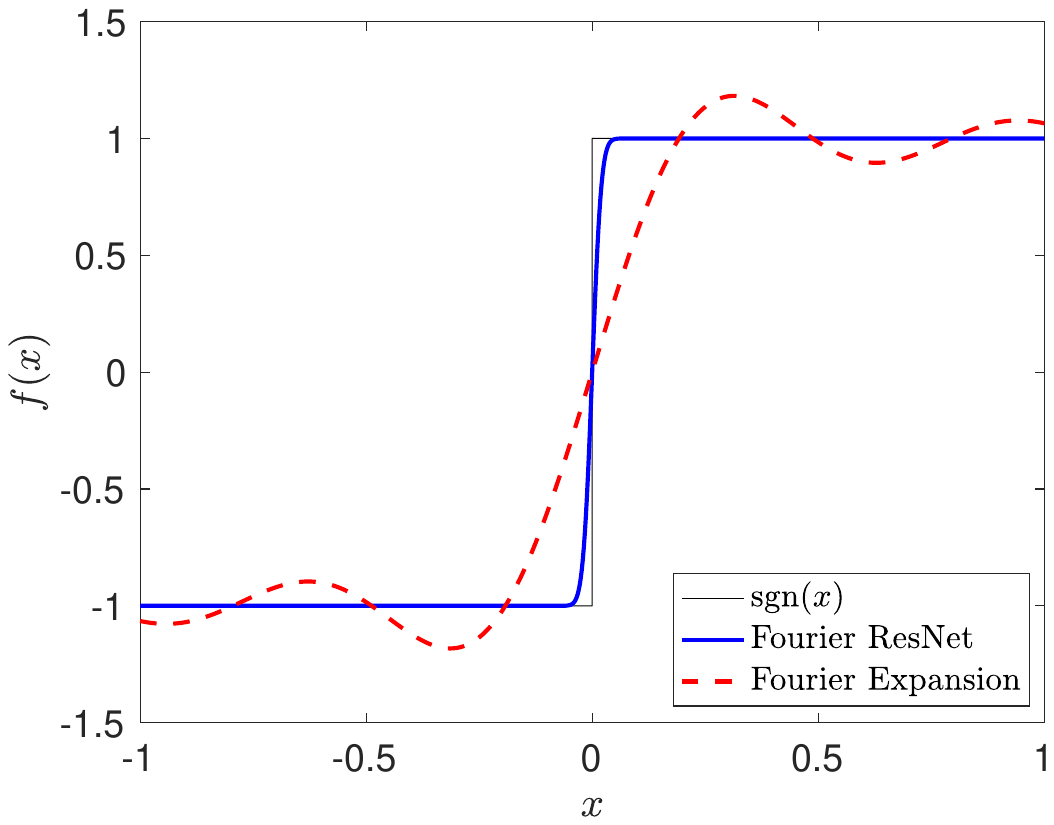}
    \hfill
\includegraphics[width=0.48\textwidth]{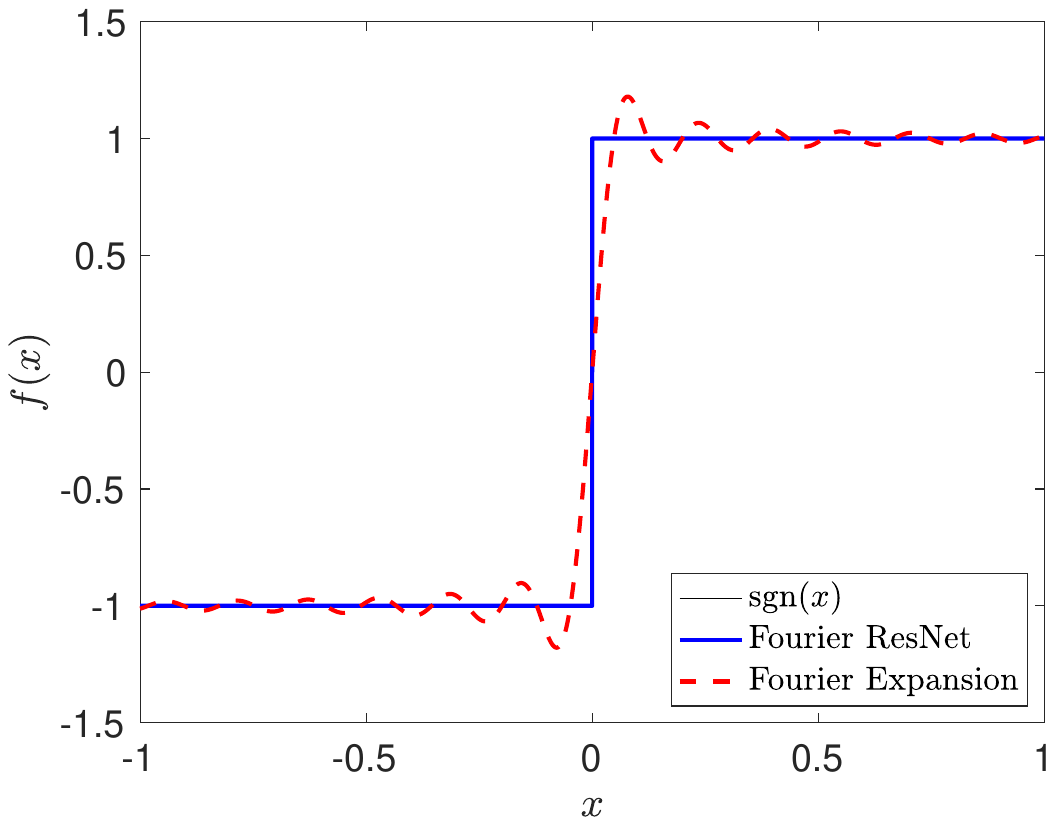}
\caption{Approximation of the sign function on $[-1,1]$ using the Fourier ResNet (solid blue) and the truncated Fourier series (dashed red) for $L = 5$ (left) and $L = 20$ (right). The exact sign function is shown as a thin solid black line for reference. While the truncated Fourier series exhibits Gibbs oscillations, the ResNet approximation remains monotonic and fully resolves the Gibbs phenomenon.}
    \label{fig:sign_approx}
\end{figure}

Figure~\ref{fig:sign_conv} displays the $L^1$ error as a function of $L$, in a log-linear scale. The plot confirms that the Fourier ResNet achieves exponential convergence in the $L^1$ norm, in stark contrast to the algebraic convergence of the truncated Fourier series.
\begin{figure}[htbp]
    \centering
\includegraphics[width=0.55\textwidth]{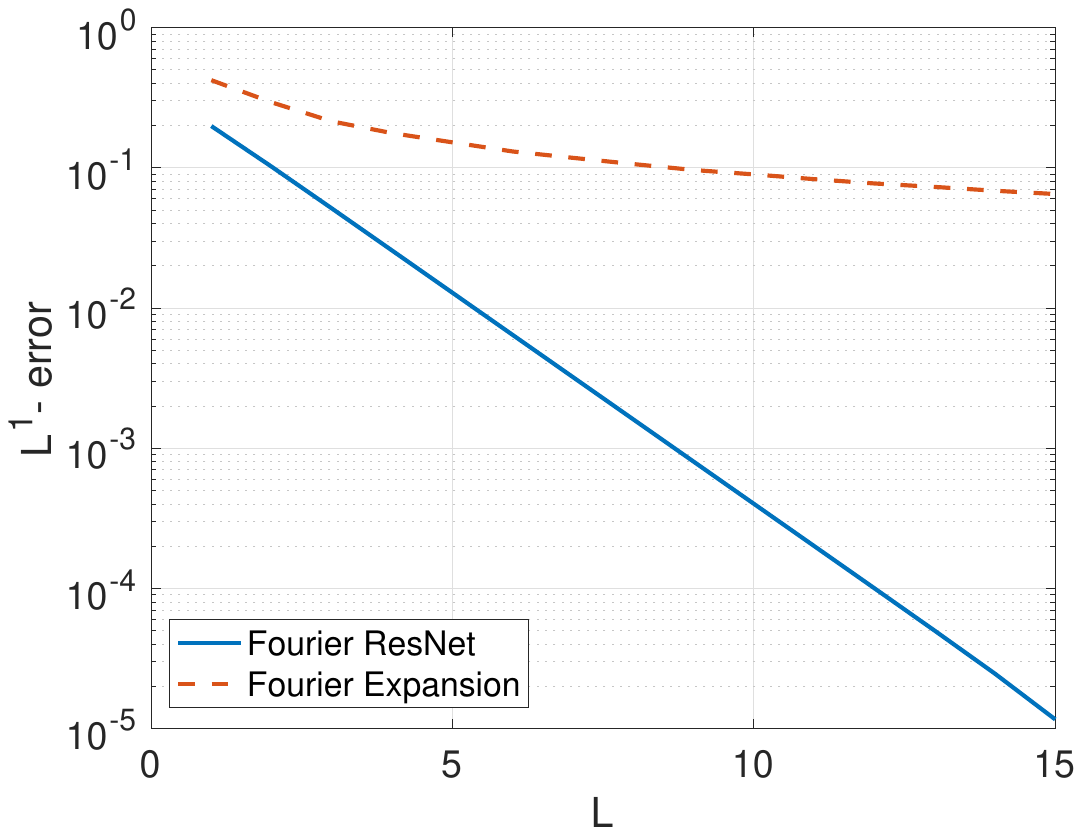}
    \caption{$L^1$ approximation error of the Fourier ResNet (solid) and the truncated Fourier series (dashed) for the sign function, plotted against the number of terms $L$ on a log-linear scale. The Fourier ResNet achieves exponential convergence, while the truncated Fourier series converges only algebraically.}
    \label{fig:sign_conv}
\end{figure}

\subsection{Constructive approximation of general piecewise smooth functions}

We now consider two representative examples of the more general class of piecewise-smooth functions discussed in Section~\ref{sec:generalization}.

\paragraph*{Piecewise-smooth function with jump discontinuities.} 
As the first example, we consider
\begin{equation}\label{eq:ex2a}
f(x) = \begin{cases}
1 + x, & x \in [-1, 0] \\
1 + \cos(\pi x), & x \in (0, 1]
\end{cases}, 
\end{equation}
as a prototype of functions that exhibit jump discontinuities in both their values and derivatives.

Figure~\ref{fig:piecewise-resnet-vs-fs} compares the approximation of $f$ using a standard truncated Fourier series and the proposed Fourier ResNet architecture. The left panel corresponds to $N = 20$ terms, with the ResNet configured using $m = 1$, $W = 6$, $L = 5$. The right panel uses $N = 30$, with $m = 1$, $W = 10$, and $L = 11$. In both cases, the total number of ResNet neurons is $N = L + 1 + W + 4(m+1)$, ensuring comparable computational cost between the two methods. In each plot, the target function $f$ is shown in thin solid black, the ResNet approximation in solid blue, and the truncated Fourier series in solid red. 
As expected, the truncated Fourier series suffers from pronounced Gibbs oscillations near the point of discontinuity at $x=0$. In contrast, the Fourier ResNet approximation exhibits much more localized oscillations, with the support of these oscillations diminishing rapidly as $N$ increases.
\begin{figure}[htb]
\centering
\includegraphics[width=0.45\textwidth]{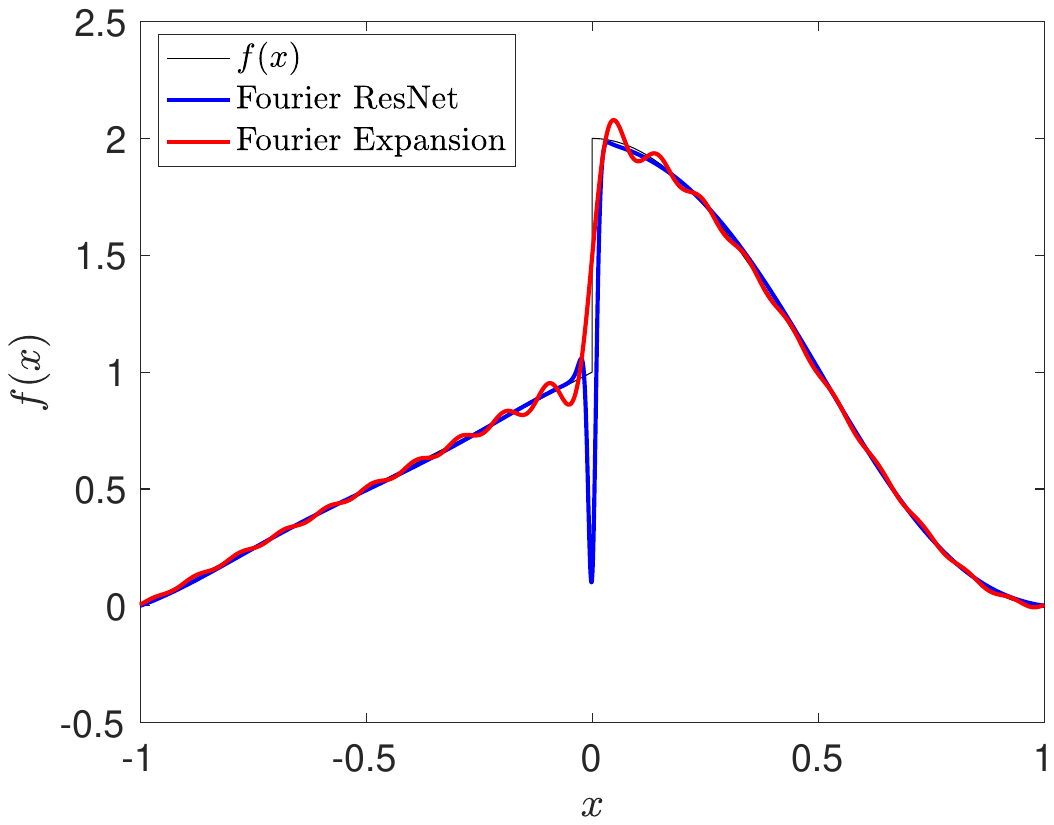}
\hfill
\includegraphics[width=0.45\textwidth]{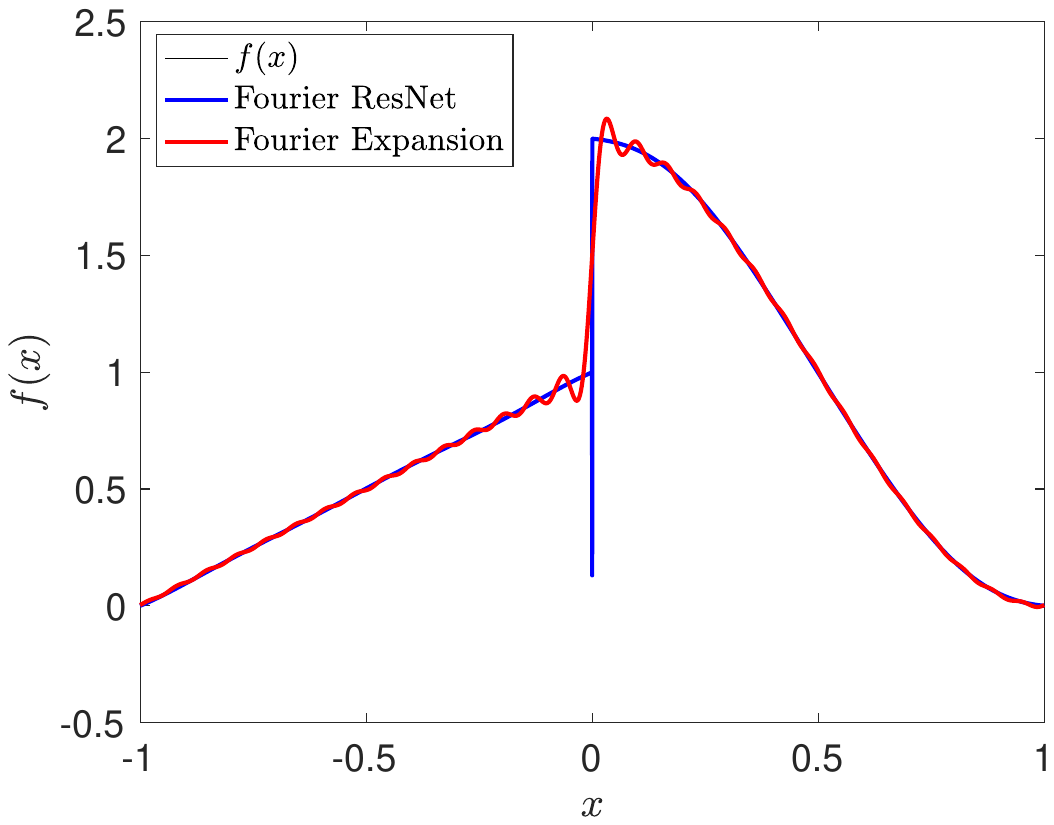}
\caption{Approximation of a piecewise smooth function using a truncated Fourier series (red) and a Fourier ResNet (blue), with both methods using the same total number of $N$ terms or neurons. Left: $N = 20$, $m = 1$, $W = 6$, $L = 5$. Right: $N = 30$, $m = 1$, $W = 10$, $L = 11$. The target function $f$ is shown in thin solid black. While the Fourier series exhibits Gibbs oscillations, the ResNet approximation localizes and suppresses these artifacts.}
\label{fig:piecewise-resnet-vs-fs}
\end{figure}

Figure~\ref{fig:ex2_Gibbs_decay} illustrates the rapid decay in the spatial support of spurious oscillations in the Fourier ResNet approximation as the depth parameter $L$ increases. We fix $m = 1$ and $W = 10$, and vary $L$ from 3 to 7, resulting in a sequence of five approximations with increasing total neuron count $N$ from 22 to 26. As expected, the widest oscillation support occurs for the smallest depth, $L = 3$, with progressively more localized behavior observed as $L$ increases. Even modest increases in depth are sufficient to significantly contract the oscillatory region near the point of non-smoothness at $x = 0$, and the trend continues with larger values of $L$.
\begin{figure}[htbp]
    \centering
\includegraphics[width=0.99\textwidth]{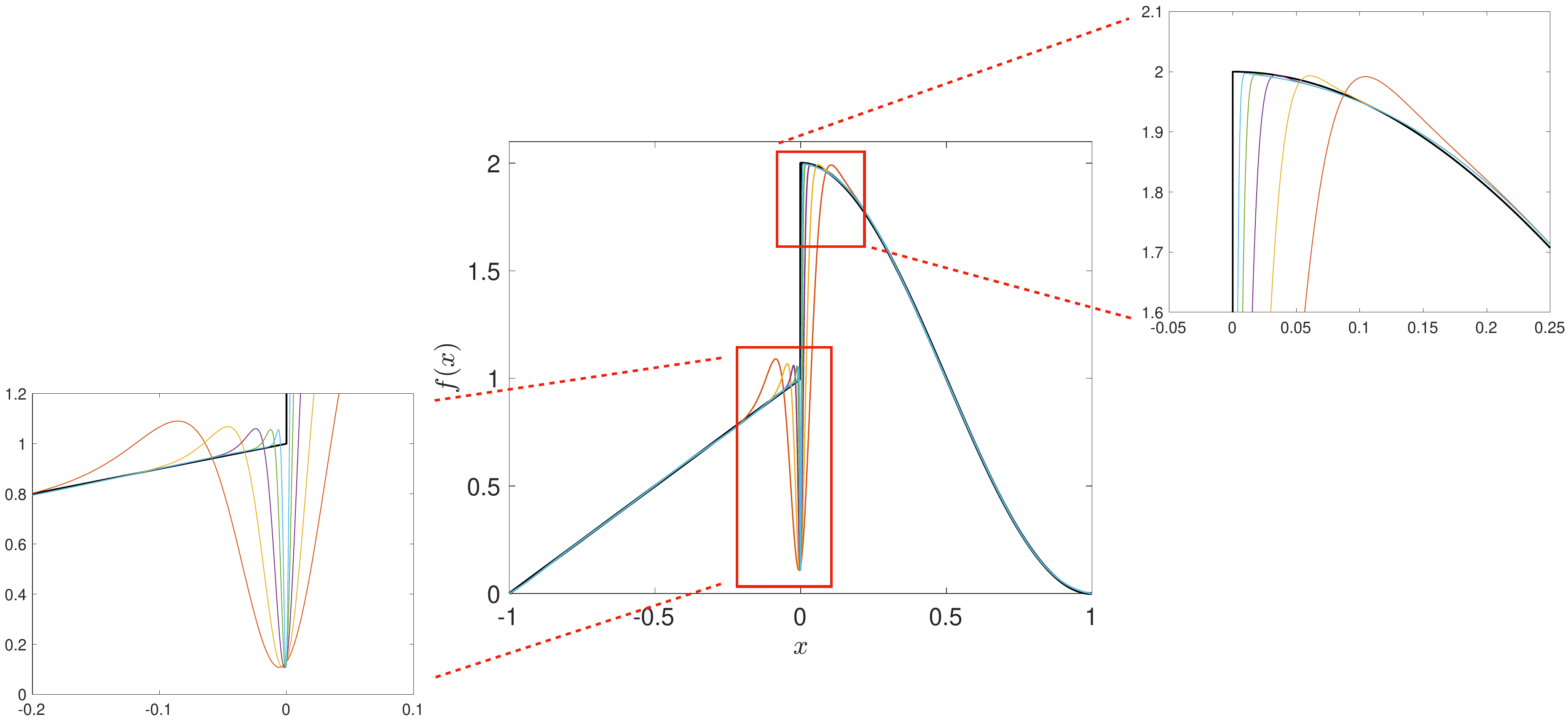}
    \caption{Decay in the spatial support of spurious oscillations in the Fourier ResNet approximation of the piecewise smooth function~\eqref{eq:ex2a}, for fixed $m = 1$ and $W = 10$. The central panel shows the approximation over the full domain, while two external zoomed panels highlight the behavior near $x = 0$, one to the left and one to the right of the origin. Each curve corresponds to a different depth $L \in \{3,4,5,6,7\}$, illustrating that the oscillations rapidly contract toward the singularity as $L$ increases.}
    \label{fig:ex2_Gibbs_decay}
\end{figure}

Figure~\ref{fig:ex2_conv} reports the $L^2$ approximation error of the Fourier ResNet as a function of the width parameter $W$, plotted on a log-log scale. The depth is fixed at $L = 20$ to ensure that the first term in the error bound \eqref{eq:main_estimate} becomes negligible, so that the observed error predominantly reflects the second term, which is theoretically expected to decay as ${\mathcal O}(W^{-m})$. The plot includes curves for $m \in \{1,2,3,4\}$ and $W = 10 \cdot 2^j$ for $j = 0, \dots, 6$. 
For comparison, we also include the $L^2$ error from approximating the same function using a standard truncated Fourier series with $N = 41 + W$ terms. This choice matches the parameter count of the Fourier ResNet with $m = 4$ and $L = 20$, ensuring comparable computational cost between the two methods.
\begin{figure}[htbp]
    \centering
\includegraphics[width=0.55\textwidth]{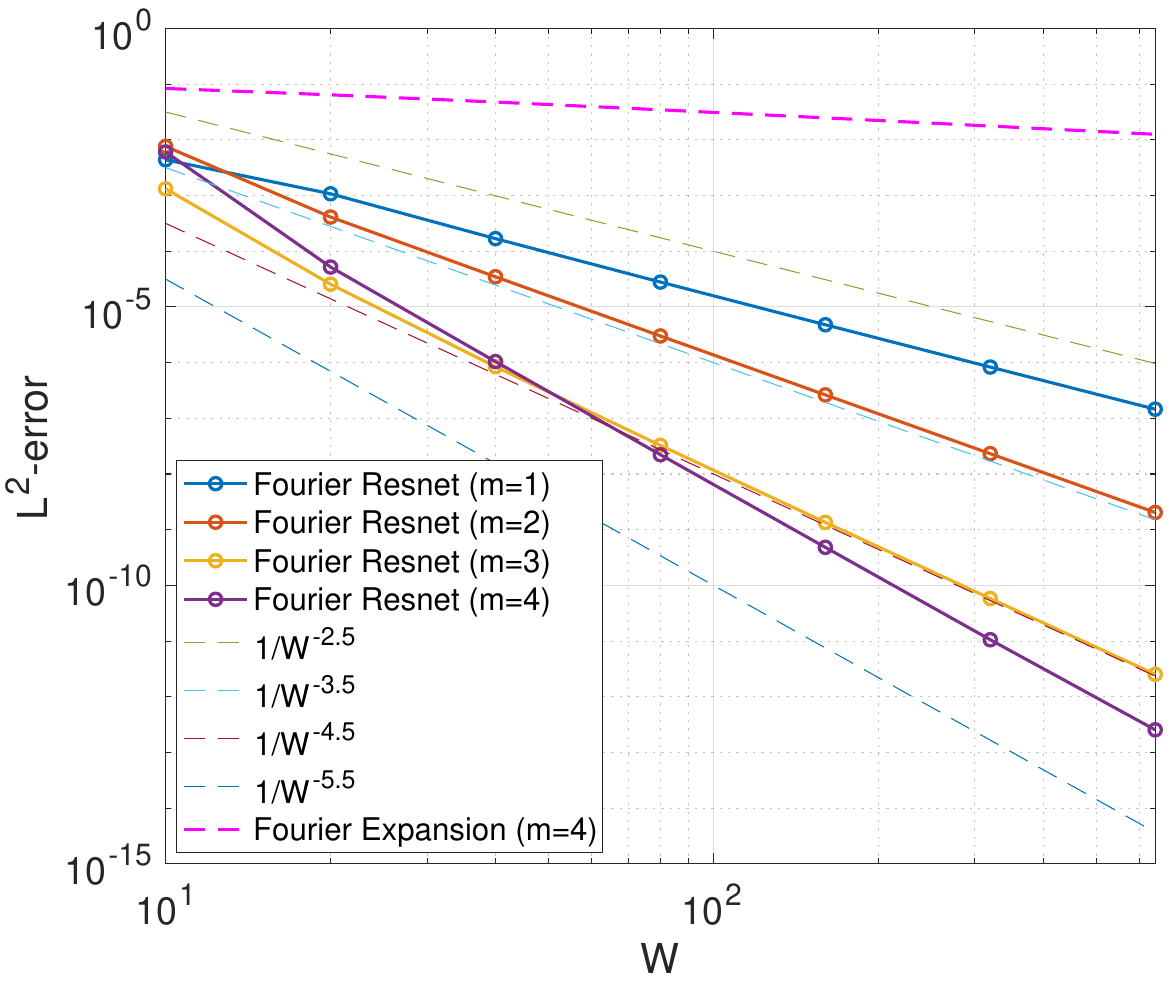}
    \caption{$L^2$ error of the Fourier ResNet approximation of the piecewise smooth function \eqref{eq:ex2a} versus the width parameter $W = 10 \cdot 2^j$ for $j = 0, \dots, 6$, shown on a log-log scale. The depth is fixed at $L = 20$ to isolate the second term in the error bound~\eqref{eq:main_estimate}. Curves correspond to $m \in \{1,2,3,4\}$. The results confirm spectral convergence of Fourier ResNet, with empirical rates close to ${\mathcal O}(W^{-m - 3/2})$, while the truncated Fourier series converges algebraically.}
    \label{fig:ex2_conv}
\end{figure}

As expected, the truncated Fourier series exhibits algebraic convergence, while the Fourier ResNet achieves spectral convergence as $W$ increases. Notably, the empirical decay appears slightly faster than the expected theoretical rate ${\mathcal O}(W^{-m})$, trending toward ${\mathcal O}(W^{-m - 3/2})$, which suggests that the practical performance of the network may exceed the theoretical worst-case bounds.

\paragraph*{Hat function with derivative discontinuity.} 
As a second example, we consider the hat function:
\begin{equation}\label{eq:ex2b}
f(x) = \begin{cases}
1 + x, & x \in [-1, 0] \\
1 - x, & x \in (0, 1]
\end{cases},
\end{equation}
which is continuous but not differentiable at $x = 0$.

As in the previous example, increasing $L$ leads to a rapid localization of oscillatory artifacts near the singularity at $x = 0$, exhibiting behavior similar to the earlier case. 
Figure~\ref{fig:hat_conv} shows the $L^2$ approximation error of the Fourier ResNet applied to the hat function, plotted on a log-log scale as a function of $W$. The network depth is fixed at $L = 20$ to isolate the influence of $W$ on the second term in the error bound~\eqref{eq:main_estimate}. The error curves correspond to smoothness indices $m \in {1, 2, 3, 4}$, with widths given by $W = 10 \cdot 2^j$ for $j = 0, \dots, 6$. The results confirm spectral convergence, consistent with theoretical expectations, and suggest empirical decay rates close to ${\mathcal O}(W^{-m - 3/2})$. For comparison, we also include the error from a standard truncated Fourier series with $N = 41 + W$ terms, which, as expected, shows algebraic convergence.
\begin{figure}[htbp]
    \centering
    \includegraphics[width=0.55\textwidth]{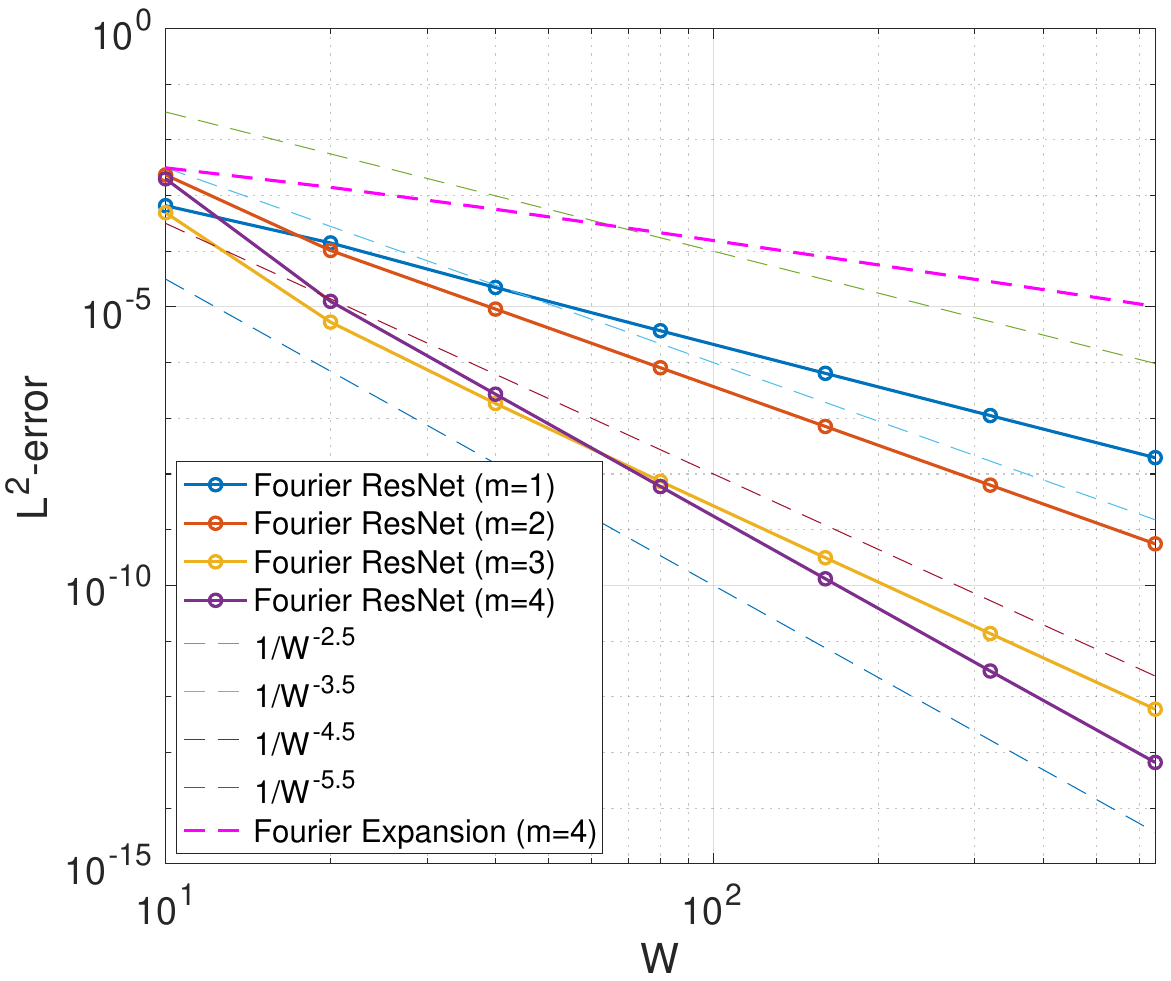}
    \caption{$L^2$ approximation error of the Fourier ResNet applied to the hat function \eqref{eq:ex2b}, plotted against the width parameter $W = 10 \cdot 2^j$ for $j = 0, \dots, 6$, on a log-log scale. The depth is fixed at $L = 20$, isolating the effect of $W$ on the second term of the error bound~\eqref{eq:main_estimate}. Curves for $m \in \{1,2,3,4\}$ confirm spectral convergence of Fourier ResNet, with empirical rates consistent with ${\mathcal O}(W^{-m - 3/2})$, while the truncated Fourier series converges algebraically.}
    \label{fig:hat_conv}
\end{figure}

This example further illustrates the ability of the Fourier ResNet to localize and suppress spurious oscillations in the presence of derivative discontinuities, achieving spectral accuracy with a relatively small number of parameters. Notably, the architecture remains effective even when the singularity differs qualitatively from the jump discontinuity in the previous example.

\subsection{Training-based approximation using the optimal sampling algorithm}\label{sec:sampling_numerics}

In this section, we investigate the performance of the optimal sampling-based training algorithm developed in \cite{Davis_etal:2025} in relation to the theoretical approximation rates established for shallow networks in Theorem~\ref{thm:nonuniformFourier} and deep networks in Theorem~\ref{thm:main}.

To test this, we approximate several target functions $f$ of varying regularity using Fourier residual networks $F_{W,L}$ trained with the optimal sampling algorithm. Throughout this section, $F_{W,L}$ denotes a Fourier ResNet with uniform width $W$ and depth $L$. We assess the quality of these approximations by computing the mean squared error (MSE) on a collection of samples $\{(x_{j}, f(x_j))\}_{j=1}^{M}$, defined as 
\begin{equation*}
    \text{MSE}(f, F_{W,L}) = \frac{1}{M}\sum_{j=1}^{M}(f(x_j) - F_{W,L}(x_j))^2.
\end{equation*}
Note that as $M\to \infty$, $\text{MSE}(f,F_{W,L})\to ||f-F_{W,L}||^{2}_{L^{2}}.$ Therefore, for sufficiently large $M$, the MSE decay rate is expected to follow the $L^{2}$-error decay rates with respect to network architecture established in Theorem \ref{thm:nonuniformFourier} and Theorem~\ref{thm:main}. 

We emphasize that the sampling algorithm constructs Fourier residual networks with fixed width and adaptive depth. Specifically, for a user-specified width $W$, layers of width $W$ are added and trained sequentially until a prescribed tolerance is met or a maximum depth is reached. Each newly added layer yields an improved approximation of the target function. Accordingly, in this section, the approximation rate with respect to depth $L$ is assessed by tracking the error after each successive layer within a single network. In contrast, because the width is not adapted during training, studying approximation rates with respect to $W$ requires training separate networks with progressively larger widths.

We present numerical results that demonstrate the algorithm's comparable performance to the theoretically constructed networks in certain cases. In other cases, the findings suggest directions for future work in the form of enhancements to the optimal sampling algorithm and opportunities for expanded theoretical results.

\paragraph*{Smooth non-periodic function.} 
As the first example, we consider the smooth non-periodic function
\begin{equation*}
    f(x) = \exp(-x^2/2)\cos(8x) + x, \qquad x\in [-1,1].
\end{equation*}
The aim of this example is to assess the MSE decay rate for shallow Fourier ResNets with respect to width parameter $W$ in comparison to the theoretically predicted rate of $\mathcal{O}(W^{-2m +1})$ from Theorem~\ref{thm:nonuniformFourier}. Using the optimal sampling algorithm, we train shallow Fourier ResNets of width $W+2m+1$ for $W\in\{2,4,8,16,32\}$ and $m\in\{1,2,3,4\}$ using $5000$ equidistributed training samples. We then evaluate these trained networks on $10000$ equidistributed test samples.
\begin{figure}[!htb]
    \centering
    \includegraphics[width=0.55\linewidth]{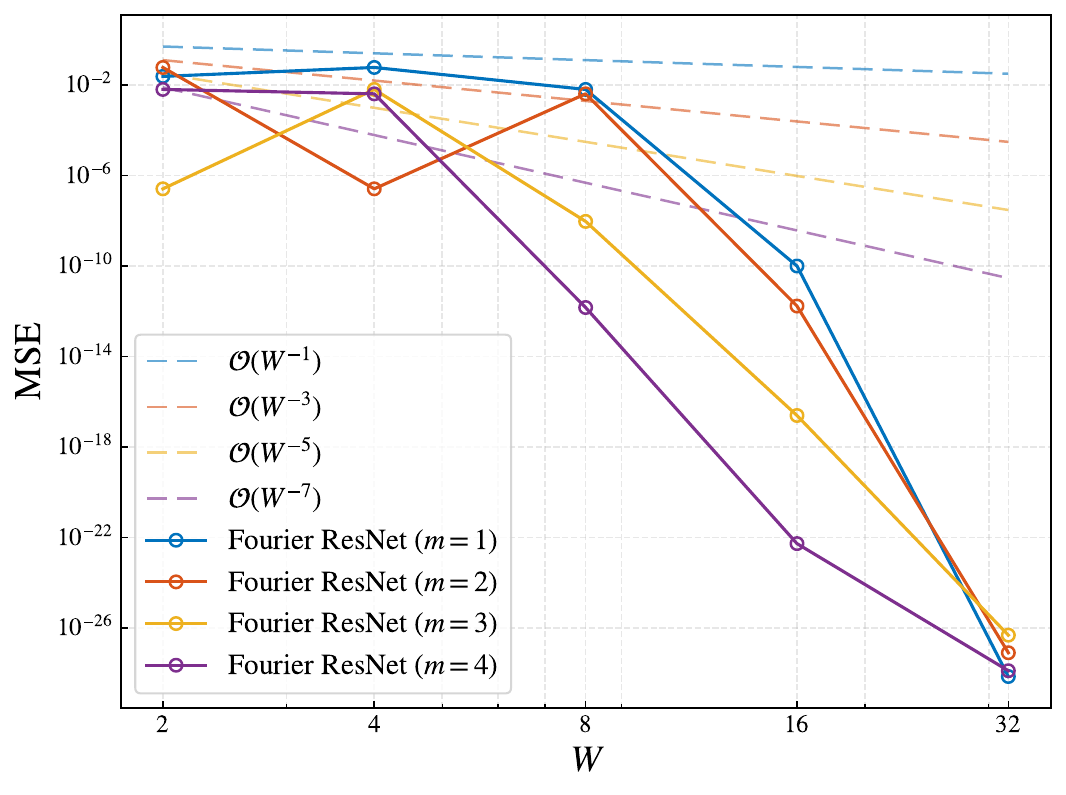}
    \caption{MSE in shallow Fourier ResNet approximations (solid circle) as a function of width parameter $W\in\{2,4,8,16,32\}$ plotted on a logarithmic scale; different colors distinguish different choices of smoothness index $m\in \{1,2,3,4\}$ with the dotted line of the same color representing the corresponding reference rate $\mathcal{O}(2^{-2m + 1})$.}
    \label{fig:rfnn_smooth_nonperiodic_function}
\end{figure}

In Figure~\ref{fig:rfnn_smooth_nonperiodic_function}, the solid lines with circular markers represent the MSE in the network approximations on the test samples as a function of $W$ plotted on a logarithmic scale. Different colors correspond to different smoothness indices $m$, with the dotted lines of the same color showing the corresponding reference rate $\mathcal{O}(W^{-2m+1})$ from Theorem~\ref{thm:nonuniformFourier}. For small $W$, the MSE in the network approximations does not exhibit monotonic decay over all choices of $m$. This is due to the inherent randomness in the sampling-based training algorithm. The frequency parameters of the network are approximately sampled from an optimal distribution that minimizes an upper bound on the $L^{2}$-error of the network; they are not necessarily the optimal pointwise choices with respect to minimizing the network's MSE. Hence for small $W$, nonmonotonicity may be expected. As $W$ grows larger, the MSE decay rate is faster than the theoretical reference rate for all choices of $m$, and the observed rate is similar across all smoothness indices. 

We remark here that Theorem~\ref{thm:nonuniformFourier} provides only an upper bound on the approximation rate and does not preclude faster rates for specific target functions, as observed in this example. The improved empirical performance may stem from the sampling-based training procedure, which approximately draws frequencies from a target-dependent optimal distribution, in contrast to the theoretical construction that relies on a fixed-point iteration with frequencies independent of the target function. For functions with relatively concentrated spectral content---such as the one considered here---this adaptive frequency sampling can yield particularly efficient and rapid approximation.

\paragraph*{The sign function.} 
As the second example, we consider the sign function $f(x)=\text{sgn}(x)$ as defined in \eqref{eqn:sgn_function},
which is piecewise constant with a jump discontinuity at $x=0$. The goal of this example is to investigate the MSE decay rate with respect to network depth $L$ in comparison to the theoretical rate of $\mathcal{O}(2^{-L})$ predicted by Theorem~\ref{thm:main}. This example also offers a qualitative comparison between Fourier ResNets trained with the optimal sampling algorithm and the constructive approximation of the sign function in Figure~\ref{fig:sign_approx}.

We train a Fourier ResNet of fixed width $W=2$ with $7000$ equidistributed training samples, and we increment the depth $L$ of the network until an MSE of $10^{-15}$ is achieved on this traing data. We achieve this machine precision at layer $L=17$. We then evaluate the trained network on $10000$ equidistributed test samples.
\begin{figure}[!htb]
    \centering
    \includegraphics[width=\linewidth]{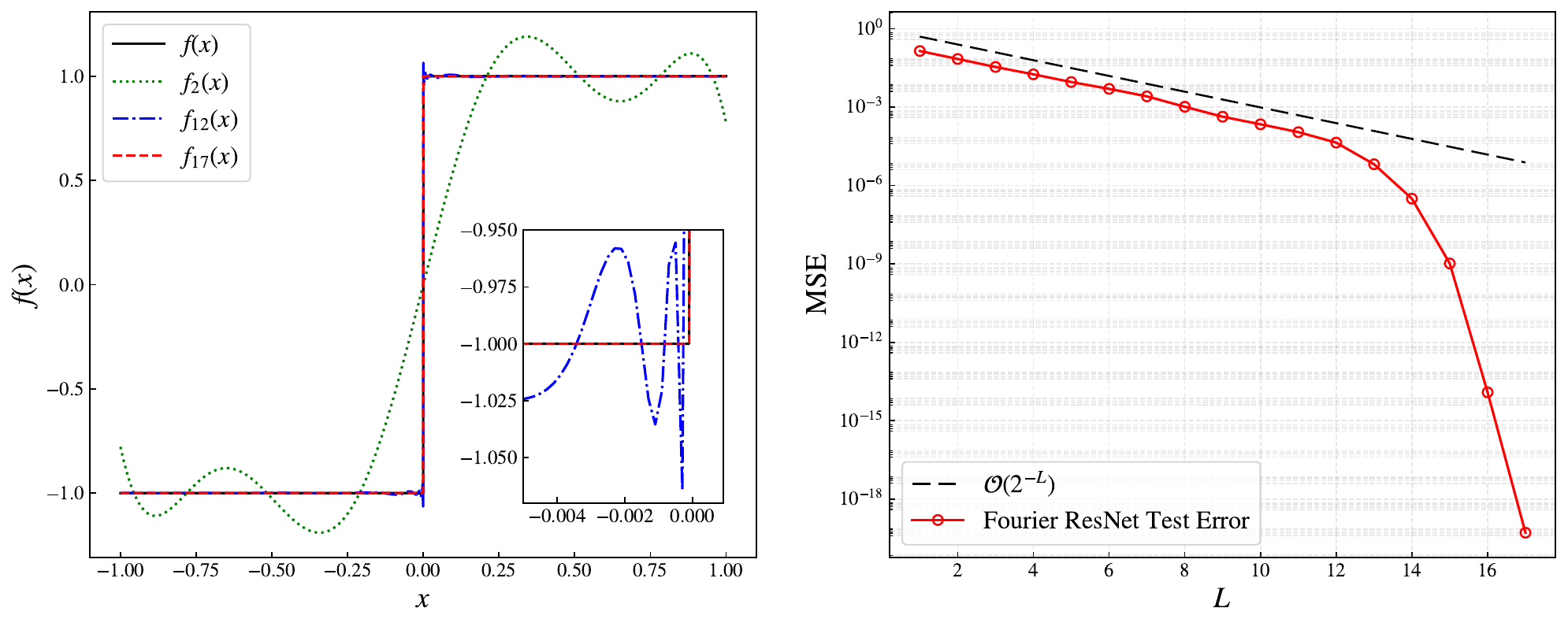}
    \caption{On the left, the true target function (black solid) along with approximations from a Fourier ResNet with fixed width $W=2$ after layers $L=2$ (green dot), $L=12$ (blue dash-dot), and $L=17$ (red dash); on the right, MSE in the Fourier ResNet approximation (red circle) as a function of the network depth $L$ together with the reference rate (black dash) $\mathcal{O}(2^{-L})$ plotted on a log-linear scale.}
    \label{fig:rfnn_sign_function}
\end{figure}

The left plot of Figure~\ref{fig:rfnn_sign_function} shows the true sign function (black solid) together with network predictions after layers $L=2$ (green dot), $L=12$ (blue dash-dot) and $L = 17$ (red dash) on the test data. The inset axes offer an enhanced view of the network approximations near the discontinuity. Unlike the theoretically constructed network in Figure~\ref{fig:sign_approx}, which exhibits a monotonic approximation of the sign function, we \emph{do observe overshoot} at intermediate steps of the training process; see e.g., the $L=12$ prediction (blue dash-dot) in the inset axes. Nevertheless, the final prediction of the network at layer $L=17$ (red dash) approximates the sign function to machine precision with \emph{no overshoot}. 

The right plot of Figure~\ref{fig:rfnn_sign_function} shows the MSE in the network approximation (red circle) on the test data as a function of the network depth $L$ together with the reference rate $\mathcal{O}(2^{-L})$ (black dash) plotted on a log-linear scale. For small values of $L$, the observed approximation rate closely matches the theoretical rate predicted by Theorem~\ref{thm:main}. As $L$ increases, however, we observe super-exponential convergence faster than the theoretical rate. The initial agreement between theory and experiment may be attributed to the close correspondence between the theoretically constructed network in Figure~\ref{fig:sign_approx}, which uses $W=1$, and the present approximation, which uses $W=2$. We hypothesize that this architectural similarity biases the sampling algorithm toward representations resembling the theoretical construction. The faster, super-exponential convergence observed as the error approaches machine precision may instead result from the sampling algorithm identifying more optimal frequencies than those used in the theoretical construction.

\paragraph*{Piecewise-smooth function with jump discontinuities.} 
As the third example, we consider the function $f(x)$ defined in \eqref{eq:ex2a},
which is a general piecewise-smooth function with jump discontinuities in both its value and derivative. This example assesses the decay rate in the optimal sampling algorithm trained Fourier ResNet MSE with respect to network depth $L$, and also provides a qualitative comparison with the theoretically constructed network in Figure~\ref{fig:ex2_Gibbs_decay}.

\begin{figure}[!htb]
    \centering
    \includegraphics[width=\linewidth]{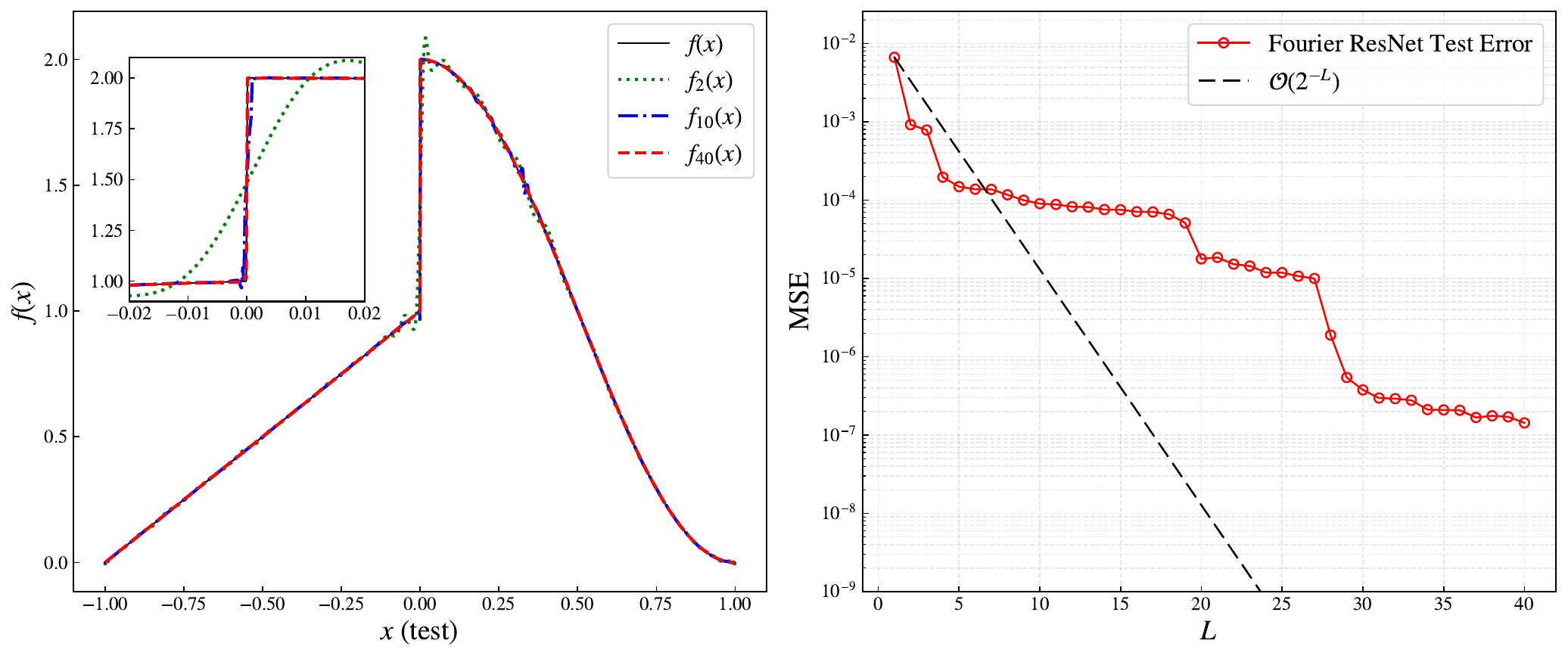}
    \caption{On the left, predictions on the test data from a Fourier ResNet of fixed width $W=40$ after layers $L=2$ (green dot), $L=10$ (blue dash-dot), and $L=40$ (red dash) together with the true target function (black solid); on the right, the MSE in the network approximation on the testing data (red circle) as a function of the network depth $L$ along with the reference rate $\mathcal{O}(2^{-L})$ (black dash) all plotted on a log-linear scale.}
    \label{fig:rfnn_gen_discont_function}
\end{figure}

We train a Fourier ResNet of fixed width $W=40$ and depth $L=40$ using $7000$ equidistributed training samples, and we evaluate the trained network on $10000$ equidistributed test samples.

Figure~\ref{fig:rfnn_gen_discont_function} presents both qualitative and quantitative results with respect to this approximation. The left plot shows the network prediction on the testing data after layers $L=2$ (green dot), $L=10$ (blue dash-dot) and $L=40$ (red dash) together with the true target function in black. The inset axes offer an enhanced view of the discontinuous interface.

The layer 2 prediction exhibits an overly-smoothed approximation of the discontinuity with overshoot and oscillatory errors localized near $x=0.4$. Away from these high-error regions the approximation closely matches the true target function. The predictions from layers $L=10$ and $L=40$ offer successively better approximations in both these high-error regions. The $L=10$ prediction exhibits a much sharper approximation of the discontinuity with very small overshoot on the left side and no visible overshoot on the right side. In the vicinity of $x=0.4$ we observe high-frequency low-magnitude oscillatory errors. The $L=40$ prediction is high-quality, closely matching the true target function throughout the domain. The discontinuity is resolved sharply with no visible overshoot, and the oscillatory errors near $x=0.4$ present in the approximations from the previous layers are no longer apparent.

The approximation obtained through sampling-based training is qualitatively similar to the constructed network in Figure~\ref{fig:ex2_Gibbs_decay}, but there are also notable differences. For small $L$, both predictions exhibit oscillatory overshoot near the discontinuous interface, and the support of these oscillations shrinks rapidly with increasing depth. In the constructed network, the overshoot persists even as this support contracts. By contrast, the network trained via the sampling algorithm shows no overshoot at $L=40$. On the other hand, away from the discontinuous interface, the theoretically constructed network is nearly exact even for small $L$, whereas the sampling-trained network exhibits small oscillatory errors near $x=0.4$ that decay slowly with depth.

The right plot of Figure~\ref{fig:rfnn_gen_discont_function} shows the MSE in the network approximation on the test data (red circle) as a function of network depth $L$ together with the reference rate $\mathcal{O}(2^{-L})$ (black dash) plotted on a log-linear scale. In contrast to the previous example, the observed approximation rate is slower than the theoretical rate predicted by Theorem~\ref{thm:main}. Several factors may contribute to this discrepancy.

First, as explained in \cite{Davis_etal:2025}, the optimal sampling algorithm does not explicitly aim to minimize the network's $L^{2}$-error. Instead, it samples the frequency parameters in each block based on optimal distributions that minimize an upper bound on the $L^{2}$-error specific to that block. Importantly, this upper bound is not sharp, which suggests that an optimization procedure explicitly targeting the minimization of $L^{2}$-error may be necessary to achieve the theoretical convergence rate for these more complex discontinuous functions.

Second, the optimal sampling algorithm aims to distribute the frequency parameters of the network according to an analytic optimal distribution. This inevitably results in near-duplicated frequencies in regions of high-probability. This can cause ill-conditioning in the least squares problem for the corresponding amplitude parameters, especially when the width $W$ is moderate to large, as is the case here. This ill-conditioning is primarily addressed through a Tikhonov regularization on the least squares problem. This is effective when only moderate tolerances are desired, but an approximation that converges to machine precision generally requires a commensurate reduction in the value of the Tikhonov constant, reintroducing conditioning issues. In practice, we hypothesize that this ill-conditioning could result in slowed convergence behavior. 

Third, the theoretical insights presented in this work indicate that efficient approximation of discontinuous functions can be achieved by utilizing compositions of Fourier modes (network depth) to approximate the discontinuous interface, while employing standard Fourier modes for the smooth regions of the function away from the discontinuity. This division of labor is not inherently incorporated into the sampling-based training algorithm.

The observations in this example motivate future theoretical and numerical research into Fourier ResNet approximations of general discontinuous functions. From a theoretical perspective, these results motivate the discovery of constructions that completely avoid overshoot at the discontinuous interface, and from a numerical perspective, they spur the development of new training strategies that can realize the theoretical convergence rate, perhaps through carefully designed network architectures inspired by the theoretical constructions. 

\section{Conclusions}
\label{sec:conclusions}

In this work, we developed a constructive approximation framework showing that Fourier residual networks can accurately represent functions with jump discontinuities while retaining high-order approximation in smooth regions. In particular, we established algebraic convergence rates determined by the available smoothness and showed that, for piecewise-$C^\infty$ functions, the approximation becomes spectral in the width parameter and exponential in depth. The construction provides a mechanism for resolving the Gibbs phenomenon through increasingly localized oscillatory regions, offering an alternative to classical spectral reconstruction techniques.

Several directions for future work naturally arise. On the theoretical side, it would be of interest to compare the present framework with advanced spectral reconstruction methods, such as filtering and Gegenbauer-based approaches, and to further analyze the behavior of the Gibbs phenomenon in the network setting, including the structure and scaling of the resulting oscillatory artifacts. Another important direction is the extension of the present analysis to higher-dimensional functions, where the geometry of discontinuities and the choice of frequency distributions become significantly more complex. From a computational perspective, it remains an open question to what extent the constructive mechanisms identified here can be effectively realized through training algorithms, and whether randomized or sampling-based strategies can achieve similar approximation behavior in practice.

%%%%%%%%%%%%%%%%%%%%%%%%%%%%%%%%%%%%%%%%%%%%%%%%%
\section*{Statements and Declarations}

\subsection*{Conflict of Interest}
The authors declare that they have no conflict of interest.

\subsection*{Funding}

No funding was received to assist with the preparation of this manuscript.

\subsection*{Author Contributions}

All authors contributed meaningfully to the research and writing of the manuscript. OD led the algorithm development and numerical experiments. MM contributed to theoretical development and algorithm design. OR led the theoretical analysis and proofs.

\subsection*{Acknowledgements}
Sandia National Laboratories is a multi-mission laboratory managed and operated by National Technology \& Engineering Solutions of Sandia, LLC (NTESS), a wholly owned subsidiary of Honeywell International Inc., for the U.S. Department of Energy’s National Nuclear Security Administration (DOE/NNSA) under contract DE-NA0003525. This written work is authored by an employee of NTESS. The employee, not NTESS, owns the right, title and interest in and to the written work and is responsible for its contents. Any subjective views or opinions that might be expressed in the written work do not necessarily represent the views of the U.S. Government. The publisher acknowledges that the U.S. Government retains a non-exclusive, paid-up, irrevocable, world-wide license to publish or reproduce the published form of this written work or allow others to do so, for U.S. Government purposes. The DOE will provide public access to results of federally sponsored research in accordance with the DOE Public Access Plan.

\subsection*{Code/Data Availability} 
For the results in Section~\ref{sec:sampling_numerics}, no new code was developed. The implementation follows the method described in \cite{Davis_etal:2025}, where the algorithm is presented in pseudocode. The implementation used by the authors is not publicly available because external release is subject to Sandia National Laboratories institutional copyright and software release and review procedures.

% -------------------- REFERENCES ----------------
\bibliography{refs_motamed.bib}

@book {MC,
AUTHOR = {Fishman, G. S.},
     TITLE = {Monte {Carlo}: {C}oncepts, {A}lgorithms, and {A}pplications},
 PUBLISHER = {Springer- Verlag},
   ADDRESS = {New York},
      YEAR = {1996}
}

@article{KlusowskiBarron2018,
  author    = {J. M. Klusowski and A. R. Barron},
  title     = {Approximation by Combinations of {ReLU} and Squared {ReLU} {R}idge Functions with \( \ell_1 \) and \( \ell_0 \) Controls},
  journal   = {IEEE Transactions on Information Theory},
  volume    = {64},
  pages     = {7649--7656},
  year      = {2018}
}

@article{Hornik_etal:89,
AUTHOR = {K. Hornik and M. Stinchcombe and H. White},
     TITLE = {Multilayer feedforward networks are universal approximators},
   JOURNAL = {Journal Neural Networks},
    VOLUME = {2},
      YEAR = {1989},
     PAGES = {359--366}
}

@article{1layerKammonen,
title = {Adaptive random {F}ourier features with {M}etropolis sampling},
author = {A. Kammonen and J. Kiessling and P. Plecháč and M. Sandberg and A. Szepessy},
journal = {Foundations of Data Science},
volume = {2},
pages = {309-332},
year = {2020}
}

@article{kammonen2023smaller,
  title={Smaller generalization error derived for a deep residual neural network compared with shallow networks},
  author={Kammonen, A. and Kiessling, J. and Plech{\'a}{\v{c}}, P. and Sandberg, M. and Szepessy, A. and Tempone, R.},
  journal={IMA Journal of Numerical Analysis},
  volume={43},
  pages={2585--2632},
  year={2023},
  publisher={Oxford University Press}
}

@article{Davis_etal:2025,
author = {O. Davis and G. Geraci and M. Motamed},
title = {Deep Learning without Global Optimization by Random {F}ourier Neural Networks},
journal={SIAM J. Scientific Computing},
volume={47},
pages={C265--C290},
year={2025}
}

@article{DavisMotamed:2024,
  author    = {O. Davis and M. Motamed},
  title     = {Approximation Power of Deep Neural Networks: An explanatory mathematical survey},
  journal   = {arXiv preprint arXiv:2207.09511},
  year      = {2024}
}

@article{Delvos1993,
  author    = {F.-J. Delvos},
  title     = {Hermite interpolation with trigonometric polynomials},
  journal   = {BIT Numerical Mathematics},
  volume    = {33},
  number    = {1},
  pages     = {113--123},
  year      = {1993},
  doi       = {10.1007/BF01990347},
  publisher = {Springer},
  url       = {https://doi.org/10.1007/BF01990347}
}

@article{GottliebShu1997,
  author    = {D. Gottlieb and C.-W. Shu},
  title     = {On the {G}ibbs' Phenomenon and Its Resolution},
  journal   = {SIAM Review},
  volume    = {39},
  pages     = {644--668},
  year      = {1997}
}

@article{Boyd2005trouble,
  author    = {J. P. Boyd},
  title     = {Trouble with {G}egenbauer Reconstruction for Defeating {G}ibbs' Phenomenon: {R}unge Phenomenon in the Diagonal Limit of {G}egenbauer Polynomial Approximations},
  journal   = {Journal of Computational Physics},
  volume    = {204},
  year      = {2005},
  pages     = {253--264}
}

@article{AdcockHansenShadrin2014,
  author    = {B. Adcock and A. C. Hansen and A. Shadrin},
  title     = {A Stability Barrier for Reconstructions from Fourier Samples},
  journal   = {SIAM Journal on Numerical Analysis},
  volume    = {52},
  pages     = {1252--1293},
  year      = {2014}
}

@article{Hewitt1979,
  author    = {E. Hewitt and R. E. Hewitt},
  title     = {The {G}ibbs-{W}ilbraham Phenomenon: An Episode in {F}ourier Analysis},
  journal   = {Historia Mathematica},
  volume    = {21},
  year      = {1979},
  pages     = {129--160}
}

@article{Tadmor2007,
  author    = {E. Tadmor},
  title     = {Filters, Mollifiers and the Computation of the {G}ibbs' Phenomenon},
  journal   = {Acta Numerica},
  volume    = {16},
  year      = {2007},
  pages     = {305--378}
}

@article{GelbTanner2006,
  author    = {A. Gelb and J. Tanner},
  title     = {Robust Reprojection Methods for the Resolution of the {G}ibbs Phenomenon},
  journal   = {Applied and Computational Harmonic Analysis},
  volume    = {20},
  year      = {2006},
  pages     = {3--25}
}

@article{JungShizgal2004,
  author    = {J.-H. Jung and B. D. Shizgal},
  title     = {Generalization of the Inverse Polynomial Reconstruction Method in the Resolution of the {G}ibbs Phenomenon},
  journal   = {Journal of Computational and Applied Mathematics},
  volume    = {172},
  year      = {2004},
  pages     = {131--151}
}

@article{Pasquetti2004,
  author    = {R. Pasquetti},
  title     = {On Inverse Methods for the Resolution of the {G}ibbs Phenomenon},
  journal   = {Journal of Computational and Applied Mathematics},
  volume    = {170},
  year      = {2004},
  pages     = {303--315}
}

@article{HrycakGrochenig2010,
  author    = {T. Hrycak and K. Gr{\"o}chenig},
  title     = {Pseudospectral Fourier Reconstruction with the Modified Inverse Polynomial Reconstruction Method},
  journal   = {Journal of Computational Physics},
  volume    = {229},
  year      = {2010},
  pages     = {933--946}
}

@article{AdcockHansen2012,
  author    = {B. Adcock and A. C. Hansen},
  title     = {Stable Reconstructions in {H}ilbert Spaces and the Resolution of the {G}ibbs Phenomenon},
  journal   = {Applied and Computational Harmonic Analysis},
  volume    = {32},
  year      = {2012},
  pages     = {357--388}
}

@article{DriscollFornberg2001,
  author    = {T. A. Driscoll and B. Fornberg},
  title     = {A Pad{\'e}-Based Algorithm for Overcoming the {G}ibbs Phenomenon},
  journal   = {Numerical Algorithms},
  volume    = {26},
  year      = {2001},
  pages     = {77--92}
}

@article{Beckermann_etal2011,
  author    = {B. Beckermann and V. Kalyagin and A. Matos and F. Wielonsky},
  title     = {How Well Does the {H}ermite--{P}ad{\'e} Approximation Smooth the {G}ibbs Phenomenon?},
  journal   = {Mathematics of Computation},
  volume    = {80},
  year      = {2011},
  pages     = {931--958}
}

@inproceedings{Rahaman_etal2019,
  author    = {N. Rahaman and A. Baratin and D. Arpit and F. Draxler and M. Lin and F. Hamprecht and Y. Bengio},
  title     = {On the Spectral Bias of Neural Networks},
  booktitle = {Proceedings of the 36th International Conference on Machine Learning (ICML)},
  year      = {2019}
}

@article{Xu_etal2019,
  author    = {Z.-Q. J. Xu and Y. Zhang and Y. Zhai and Z. Ma},
  title     = {Frequency Principle: Fourier Analysis Sheds Light on Deep Neural Networks},
  journal   = {Communications in Computational Physics},
  volume    = {28},
  number    = {5},
  year      = {2020},
  pages     = {1746--1767}
}

@article{Basri_etal2020,
  author    = {R. Basri and D. Jacobs and I. Landa and Y. Kasten},
  title     = {Frequency bias in neural networks for input of non-uniform density},
  journal   = {arXiv preprint arXiv:2002.11610},
  year      = {2020}
}

@article{Wang_etal2021,
  author    = {S. Wang and H. Zhang and L. Franceschi and J. Fu and C.-J. Hsieh},
  title     = {On the Convergence of {F}ourier Neural Operators: From Single-scale to Multiscale},
  journal   = {arXiv preprint arXiv:2106.02582},
  year      = {2021}
}

@article{Bubeck_etal2021,
  author    = {S. Bubeck and M. Sellke},
  title     = {A universal law of robustness via isoperimetry},
  journal   = {arXiv preprint arXiv:2106.04132},
  year      = {2021}
}

@article{AdcockDexter2021,
  author    = {B. Adcock and N. Dexter},
  title     = {The Gap between Theory and Practice in Function Approximation with Deep Neural Networks},
  journal   = {SIAM Journal on Mathematics of Data Science},
  volume    = {3},
  pages     = {624--655},
  year      = {2021}
}

@article{Barron1993,
  author    = {A. R. Barron},
  title     = {Universal approximation bounds for superpositions of a sigmoidal function},
  journal   = {IEEE Transactions on Information Theory},
  year      = {1993},
  volume    = {39},
  pages     = {930--945}
}

@article{LiaoMing:2025,
  author    = {Liao, Y. and Ming, P.},
  title     = {Spectral {B}arron space for deep neural network approximation},
  journal   = {arXiv preprint arXiv:2309.00788},
  year      = {2025}
}

@article{Carleson1966,
  author    = {L. Carleson},
  title     = {On convergence and growth of partial sums of {F}ourier series},
  journal   = {Acta Mathematica},
  volume    = {116},
  pages     = {135--157},
  year      = {1966}
}

@incollection{Hunt1968,
  author    = {R. A. Hunt},
  title     = {On the convergence of {F}ourier series},
  booktitle = {Orthogonal Expansions and their Continuous Analogues},
  publisher = {Southern Illinois University Press},
  year      = {1968},
  pages     = {235--255},
  note      = {Proc. Conf., Edwardsville, Ill., 1967}
}

@book{Stein1970,
  author    = {E. M. Stein},
  title     = {Singular Integrals and Differentiability Properties of Functions},
  publisher = {Princeton University Press},
  year      = {1970}
}

@book{SteinWeiss1971,
  author    = {E. M. Stein and G. Weiss},
  title     = {Introduction to {F}ourier Analysis on {E}uclidean Spaces},
  publisher = {Princeton University Press},
  year      = {1971}
}

@book{Grafakos2014,
  author    = {L. Grafakos},
  title     = {Classical {F}ourier Analysis},
  publisher = {Springer},
  year      = {2014},
  edition   = {3rd},
  series    = {Graduate Texts in Mathematics},
  volume    = {249}
}

@book{Boyd2000,
  author    = {J. P. Boyd},
  title     = {Chebyshev and {F}ourier Spectral Methods},
  edition   = {2nd},
  publisher = {Dover Publications},
  year      = {2000}
}

@article{petersen2018optimal,
  title={Optimal approximation of piecewise smooth functions using deep {ReLU} neural networks},
  author={P. Petersen and F. Voigtlaender},
  journal={Neural Networks},
  volume={108},
  pages={296--330},
  year={2018},
  publisher={Elsevier}
}

@article{yarotsky2019phase,
  title={The phase diagram of approximation rates for deep neural networks. arXiv e-prints, page},
  author={D. Yarotsky and A. Zhevnerchuk},
  journal={arXiv preprint arXiv:1906.09477},
  year={2019}
}

@article{cybenko1989approximation,
  title={Approximation by superpositions of a sigmoidal function},
  author={G. Cybenko},
  journal={Mathematics of control, signals and systems},
  volume={2},
  number={4},
  pages={303--314},
  year={1989},
  publisher={Springer}
}

@article{leshno1993multilayer,
  title={Multilayer feedforward networks with a nonpolynomial activation function can approximate any function},
  author={M. Leshno and V. Y. Lin and A. Pinkus and S. Schocken},
  journal={Neural networks},
  volume={6},
  number={6},
  pages={861--867},
  year={1993},
  publisher={Elsevier}
}
\bibliographystyle{plain}

\end{document}